\newtheorem{thm}{Theorem}[section]
\newtheorem{lem}[thm]{Lemma}
\newtheorem{NN}[thm]{}
\theoremstyle{definition}\newtheorem{df}[thm]{Definition}
\theoremstyle{definition}
\theoremstyle{definition}
\renewcommand{\phi}{\varphi}
\newcommand{\Q}{\mathbb{Q}}
\newcommand{\T}{\mathbb{T}}
\newcommand{\hm}{homomorphism}
\newcommand{\dt}{\delta}
\newcommand{\ep}{\epsilon}
\newcommand{\andeqn}{\,\,\,{\rm and}\,\,\,}
\newcommand{\rforal}{\,\,\,{\rm for\,\,\,all}\,\,\,}
\newcommand{\CA}{$C^*$-algebra}
\newcommand{\SCA}{$C^*$-subalgebra}
\newcommand{\beq}{\begin{eqnarray}}
\newcommand{\eneq}{\end{eqnarray}}
\newcommand{\tforal}{\,\,\,\text{for\,\,\,all}\,\,\,}
\newcommand{\fp}{{\mathfrak{p}}}
\title{Inductive Limits of Subhomogeneous $C^*$-algebras with Hausdorff Spectrum}
\author{Huaxin Lin}
\date{}
\begin{document}

\maketitle

\begin{abstract}
 We consider unital simple inductive limits of
 generalized dimension drop \CA s
 They are so-called ASH-algebras and  include all unital simple AH-algebras and all
dimension drop \CA s. Suppose that $A$ is one of these \CA s. We show
that $A\otimes Q$ has tracial rank no more than one, where $Q$ is
the rational UHF-algebra. As a consequence, we obtain the
following classification result:  Let $A$ and $B$ be  two unital
simple inductive limits of generalized dimension drop algebras with no
dimension growth.  Then $A\cong B$ if and only if they have the
same Elliott invariant. 

\end{abstract}

\section{Introduction}

A unital AH-algebra is an inductive limit of finite direct sums of
\CA s with the form
$$
PC(X, M_k)P,
$$
where $X$ is a finite dimensional compact metric space, $k\ge 1$
is an integer and $P\in C(X, M_k)$ is a projection. One of the
successful stories in the program of classification of amenable
\CA s, or otherwise known as the Elliott program is the
classification of unital simple AH-algebras with no dimension
growth (see \cite{EGL2}) by their $K$-theoretical invariant, or
otherwise called the Elliott invariant. Other classes of inductive
limits of building blocks studied in the program include so-called
simple inductive limits of dimension drop interval (or circle)
algebras (see \cite{S}, \cite{EGJS}, \cite{Ts}, \cite{JS} and \cite{M}, for
example). One reason to study the simple inductive limits of
dimension drop interval algebras is to construct unital simple \CA
s with the Elliott invariant which can not be realized by unital
simple AH-algebras. Notably, the so-called Jiang Su-algebras of
unital projectionless simple ASH-algebra ${\cal Z}$ whose Elliott
invariant is the same as that of complex field.

However, until very recently, they were classified as separate
classes of simple \CA s. In particular, it was not known that one
\CA\,  in one of these classes should be isomorphic to ones in a
different  class with the same Elliott invariant in general.
Furthermore, methods used in classification of unital simple AH-algebras 
and unital simple dimension drop algebras dealt with maps from 
dimension drop algebras to dimension drop algebras and maps between 
some homogeneous \CA s. These methods do not work for maps from 
dimension drop algebras into homogeneous \CA s or maps from homogeneous 
\CA s into dimension drop algebras in general. Therefore most classification theorems for simple inductive limits 
of basic building blocks  do not deal with mixed building blocks. 
In
this paper we consider a general class of inductive limits of
sub-homogenous \CA s with Hausdorff spectrum which includes both
AH-algebras and all known unital simple inductive limits of
dimension drop algebras as well as inductive limits with mixed basic building blocks. We will give a classification theorem for
this class of unital simple \CA s.

Let $X$ be a compact metric space. Let $k\ge 1$ be an integer, let
$\xi_1,\xi_2,...,\xi_n\in X$ and let $m_1, m_2,...,m_n\ge 1$ be
integers such that $m_j|k,$ $j=1,2,...,n.$ One may write
$M_k=M_{m_j}\otimes M_{k/m(j)}.$ Let  $B_j\cong M_{m_j}\otimes
1_{k/m(j)}$ be a unital \SCA\, of $M_k$ ($1_{B_j}=1_{M_k}$).

\begin{df}\label{D1}
Define
$$
D_k(X, \xi_1,\xi_2,...,\xi_n, m_1,m_2,....,m_n)=\\
\{f\in C(X, M_k): f(\xi_j)\in B_j,\,1\le j\le n\}.
$$
\end{df}


In the above form, if $X$ is an interval, it is known as (general)
dimension drop interval algebra, and if $X=\T,$ it is known as
(general) dimension drop circle algebra. Here we allow high dimensional 
dimension drop algebras. So when $X$ is a
connected finite CW complex, algebras $D_k(X,
\xi_1,\xi_2,...,\xi_n, m_1,m_2,....,m_n)$ and finite direct sums
of their  unital hereditary \SCA s may be called general dimension
drop algebras. When $m_j=k,$ $1\le j\le d(j),$ $j=1,2,...,N,$ it
is a homogenous \CA. Thus AH-algebras, inductive limits of
dimension drop interval algebras and inductive limits of dimension
drop circle algebras are inductive limit of general dimension drop
algebras. A classification theorem for unital simple inductive
limits of those general dimension drop algebras would not only
generalize many previous known classification theorems but more
importantly it would unify these classification results. However,
one may consider more general \CA s.


Let $X_1, X_2, ...,X_n\subset X$ be disjoint compact subsets. Let
$k,$ $m_1, m_2,...,m_n$ and $B_1,B_2,...,B_n$ be as above. Define
\begin{df}
\beq\label{D2}
{\bar D}_{X,k, \{X_j\}, n}= \{f\in C(X, M_k): f(x)\in B_j\tforal
x\in X_j, \, 1\le j\le n\}.
\eneq
\end{df}
By a generalized dimension drop algebra, we mean
 \CA s of the form:
$$
\bigoplus_{j=1}^N P_j{\bar D}_{X_j,r(j),\{X_{j,i}\},d(j)}P_j
$$
where $P_j\in D_{X_j, r(j),\{X_{i,j}\}, d(j)}$ is a projection,
$X_j$ is a compact metric space with finite dimension and with
property (A) (see \ref{DA} bellow) and $\{X_{j,i}\}$ is a
collection of finitely many disjoint compact subsets of $X_j.$

Note that, in the following, $A\otimes
{\cal Z}$ and $A$ have exactly the same Elliott invariant whenever $K_0(A)$ is weakly unperforated.

 We prove the following theorem:
\begin{thm}\label{MT1}
Let $A$ and let $B$ be two unital simple inductive limits of
generalized dimension drop algebras. Then $A\otimes {\cal Z}\cong
B\otimes {\cal Z}$ if and only if
$$
(K_0(A), K_0(A)_+, [1_A], K_1(A), T(A), r_A)\cong (K_0(B),
K_0(B)_+, [1_B], K_1(B), T(B), r_B).
$$
\end{thm}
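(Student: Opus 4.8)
The plan is to derive the theorem from the paper's main technical result --- that $A\otimes Q$ and $B\otimes Q$ have tracial rank at most one --- by combining it with the classification theorem for $\cal Z$-stable $C^*$-algebras that become of tracial rank one after tensoring with $Q$. The ``only if'' direction is functorial: any isomorphism $A\otimes {\cal Z}\cong B\otimes {\cal Z}$ induces an isomorphism of Elliott invariants, and because $K_0$ of these algebras is weakly unperforated, tensoring with $\cal Z$ does not alter the Elliott invariant (see the remark preceding the theorem), so the invariants of $A$ and $B$ coincide. Hence all the work lies in the ``if'' direction, which I treat below.

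First I would verify the structural hypotheses. Every generalized dimension drop algebra is a finite direct sum of corners $P_j\bar D_{X_j,r(j),\{X_{j,i}\},d(j)}P_j$ of subalgebras of $C(X_j,M_{r(j)})$, hence is subhomogeneous and therefore of type I; in particular it is nuclear and lies in the UCT class. These properties are inherited by the inductive limits, so $A$ and $B$ --- and thus $A\otimes {\cal Z}$ and $B\otimes {\cal Z}$ --- are unital separable simple nuclear $C^*$-algebras satisfying the UCT, and the latter two are moreover $\cal Z$-stable. Since the universal UHF-algebra $Q$ is strongly self-absorbing, it absorbs $\cal Z$, that is $Q\otimes {\cal Z}\cong Q$; consequently
$$(A\otimes {\cal Z})\otimes Q\cong A\otimes({\cal Z}\otimes Q)\cong A\otimes Q,$$
and similarly for $B$. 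By the main technical result the right-hand side has tracial rank at most one, so $A\otimes {\cal Z}$ and $B\otimes {\cal Z}$ are rationally of tracial rank one.

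Finally I would match the invariants and invoke the classification theorem. Assuming $\mathrm{Ell}(A)\cong \mathrm{Ell}(B)$, weak unperforation again gives $\mathrm{Ell}(A\otimes {\cal Z})\cong \mathrm{Ell}(A)\cong \mathrm{Ell}(B)\cong \mathrm{Ell}(B\otimes {\cal Z})$. Applying the classification theorem for unital separable simple amenable $\cal Z$-stable $C^*$-algebras in the UCT class whose tensor product with $Q$ has tracial rank at most one --- with $A\otimes {\cal Z}$ and $B\otimes {\cal Z}$ in the two slots --- then yields $A\otimes {\cal Z}\cong B\otimes {\cal Z}$, as required. The substantive obstacle is not in this deduction but in its two imported ingredients: the decisive one, established in the earlier sections, is that $A\otimes Q$ has tracial rank at most one, while the passage from $Q$-stable to $\cal Z$-stable classification is the Winter-type localization packaged in the cited theorem. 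Within the present argument the only delicate point is the weak unperforation of $K_0(A)$ and $K_0(B)$, which underlies the identity $\mathrm{Ell}(A)=\mathrm{Ell}(A\otimes {\cal Z})$; this is where the finite-dimensionality and property (A) of the spectra enter, and it is precisely the hypothesis behind the remark preceding the theorem.
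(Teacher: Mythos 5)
Your proposal follows the paper's own route exactly: Theorem \ref{MT3} gives $TR(A\otimes Q)\le 1$, hence $A,B\in{\cal A}$, and Theorem \ref{MTQ} (Corollary 11.9 of \cite{Lnclasn}) then yields the isomorphism; your only deviation --- applying the classification theorem to $A\otimes{\cal Z}$ and $B\otimes{\cal Z}$ and using ${\cal Z}\otimes{\cal Z}\cong{\cal Z}$ and $Q\otimes{\cal Z}\cong Q$ --- is a cosmetic detour, since the paper feeds $A$ and $B$ into \ref{MTQ} directly. The bookkeeping you supply (nuclearity and UCT for subhomogeneous limits, the weak-unperforation remark identifying $\mathrm{Ell}(A)$ with $\mathrm{Ell}(A\otimes{\cal Z})$, and the functorial ``only if'' direction) is precisely what the paper's two-line proof leaves implicit.
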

(see \ref{DK}).

Denote by ${\cal D}$ the class of inductive limits of generalized
dimension drop algebras.
By a recent result of W. Winter (\cite{W3}), unital simple
ASH-algebras with no dimension growth are ${\cal Z}$-stable.
Therefore we also have the following:

\begin{thm}\label{MT2}
Let $A,\, B\in {\cal D}$ be two unital simple \CA s with no
dimension growth. Then $A\cong B,$ if and only if
$$
(K_0(A), K_0(A)_+, [1_A], K_1(A), T(A), r_A)\cong (K_0(B),
K_0(B)_+, [1_B], K_1(B), T(B), r_B).
$$
\end{thm}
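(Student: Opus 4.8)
The plan is to derive Theorem~\ref{MT2} as an immediate consequence of Theorem~\ref{MT1} together with the cited result of Winter on ${\cal Z}$-stability. The key observation is that once we know a unital simple ASH-algebra $A$ with no dimension growth is ${\cal Z}$-stable, we have a canonical isomorphism $A\cong A\otimes{\cal Z}$, so the two theorems are comparing the same objects. The main work is therefore entirely bookkeeping about invariants, not new structural analysis.

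First I would invoke Winter's theorem (\cite{W3}): if $A,B\in{\cal D}$ are unital simple \CA s with no dimension growth, then $A$ and $B$ are ${\cal Z}$-stable, i.e.\ $A\cong A\otimes{\cal Z}$ and $B\cong B\otimes{\cal Z}$. Next I would use the remark preceding Theorem~\ref{MT1}, which states that $A\otimes{\cal Z}$ and $A$ have \emph{exactly the same} Elliott invariant whenever $K_0(A)$ is weakly unperforated; since $A$ is a unital simple ASH-algebra (and in particular exact and ${\cal Z}$-stable), $K_0(A)$ is weakly unperforated, so the hypothesis applies to both $A$ and $B$. This gives canonical identifications
$$
(K_0(A), K_0(A)_+, [1_A], K_1(A), T(A), r_A)\cong (K_0(A\otimes{\cal Z}), K_0(A\otimes{\cal Z})_+, [1_{A\otimes{\cal Z}}], K_1(A\otimes{\cal Z}), T(A\otimes{\cal Z}), r_{A\otimes{\cal Z}}),
$$
and similarly for $B$.

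With these two inputs in hand the equivalence follows formally. For the forward direction, if $A\cong B$ then the Elliott invariants are trivially isomorphic by functoriality of the invariant. For the reverse direction, suppose the Elliott invariants of $A$ and $B$ are isomorphic. By the invariance under tensoring with ${\cal Z}$, the Elliott invariants of $A\otimes{\cal Z}$ and $B\otimes{\cal Z}$ are then also isomorphic. Theorem~\ref{MT1} now yields $A\otimes{\cal Z}\cong B\otimes{\cal Z}$, and composing with the ${\cal Z}$-stability isomorphisms $A\cong A\otimes{\cal Z}$ and $B\otimes{\cal Z}\cong B$ gives $A\cong B$, as desired.

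The one point requiring care, and the only place where anything could go wrong, is ensuring that the invariant-preservation used in both directions is genuinely natural, so that an abstract isomorphism of Elliott invariants for $A$ and $B$ transports correctly to one for $A\otimes{\cal Z}$ and $B\otimes{\cal Z}$ that can feed into Theorem~\ref{MT1}. This is guaranteed because the identification of the invariant of $A$ with that of $A\otimes{\cal Z}$ is implemented by the canonical embedding $a\mapsto a\otimes 1_{\cal Z}$, which induces the identity on $K$-theory, the trace simplex, and the pairing $r$; hence the diagram of invariants commutes and the isomorphism transports as required. Beyond this naturality check, the argument is purely a composition of the cited results, so I do not expect any substantial obstacle.
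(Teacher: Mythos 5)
Your proposal is correct and follows essentially the same route as the paper: the paper's proof also invokes Winter's results (finite decomposition rank of no-dimension-growth ASH-algebras via \cite{W1}, hence ${\cal Z}$-stability via \cite{W3}) and then deduces the theorem immediately from Theorem~\ref{MT1}, using the remark that $A$ and $A\otimes{\cal Z}$ share the same Elliott invariant when $K_0(A)$ is weakly unperforated. The only differences are bookkeeping: the paper routes the ${\cal Z}$-stability citation through the intermediate decomposition-rank result of \cite{W1}, and it leaves implicit the weak-unperforation and naturality checks that you spell out.
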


This unifies the  previous known classification theorems for unital
simple ASH-algebras with Hausdorff spectrum such as 
\cite{EGL2}, \cite{EGJS}, \cite{JS}, \cite{Ts} and \cite{M}, etc.
In fact, we show that, if $A\in {\cal D}$ is a unital simple \CA,
then the tracial rank of $A\otimes Q$ is no more than one, where
$Q$ is the UHF-algebra with $(K_0(Q), K_0(Q)_+, [1_Q])=(\Q, \Q_+,
1).$ We then obtain the above mentioned classification theorems by  applying a recent classification theorem in
\cite{Lnclasn} for unital simple \CA s $A$  with the  tracial rank of $A\otimes Q$  no more than one. 
Therefore one may view our result here as an application of the classification result in \cite{Lnclasn}.

\vspace{0.2in}

{\bf Acknowledgments}: This work was mostly done when the author
was in East China Normal University during the summer of 2008. It
was partially supported by an NSF grant (DMS074813), the
Chang-Jiang Professorship from East China Normal University and
Shanghai Priority  Academic Disciplines.

\section{Preliminaries}

\begin{NN}
{\rm Let $\{A_n\}$ be a sequence of \CA s and let $\phi_n: A_n\to
A_{n+1}$ be \hm s. Denote by $A=\lim_{n\to\infty}(A_n, \phi_n)$
the \CA\, of the inductive limit. Put $\phi_{n, m}=\phi_m\circ
\phi_{m-1}\circ \cdots \phi_n: A_n\to A_m$ for $m>n.$ Denote by
$\phi_{n,\infty}: A_n\to A$ the \hm\, induced by the inductive
limit system.}

\end{NN}

\begin{NN} {\rm
Let $X$ be a compact metric space, let $\xi\in X$ and let $d>0.$
Set
$$
B(\xi, d)=\{x\in X: {\rm dist}(x, \xi)<d\}.
$$

Suppose that $0<d_2<d_1.$ Set
$$
A(\xi, d_1,d_2)=\{x\in X: d_1>{\rm dist}(x, \xi)>d_2\}.
$$
}
\end{NN}

\begin{df}
{\rm Let $X$ be a compact metric space, let $A$ be a \CA\,  and
let $B\subset C(X, A)$ be a \SCA. For each $x\in X,$ denote by
$\pi_x: C(X, A)\to A$  the \hm\, defined by $\pi_x(f)=f(x)$ for
all $f\in C(X,A).$ Suppose that $C_x=\pi_x(B).$ Then $\pi_x: B\to
C$ is a \hm. }

\end{df}

\begin{NN}
{\rm Let $A$ be a \CA,  let $x,\,y\in A$ be two elements and let
$\ep>0.$ We write
$$
x\approx_{\ep} y,
$$
if $\|x-y\|<\ep.$

Suppose that  ${\cal S}\subset A$ is a subset. We write
$$
x\in_{\ep} {\cal S},
$$
if $x\approx_{\ep} y$ for some $y\in {\cal S}.$

Let $B$ be another \CA, let $\phi, \psi: A\to B$ be two maps, let
${\cal S}\subset A$ be a subset and let $\ep>0.$ We write
$$
\phi\approx_{\ep} \psi \,\,\,{\rm on}\,\,\, {\cal S},
$$
if $\psi(x)\approx_{\ep} \phi(x)$ for all $x\in {\cal S}.$

}

\end{NN}

\begin{NN}
{\rm  Let $A$ be a unital \CA. Denote by $U(A)$ the unitary group
of $A.$ Denote by $U_0(A)$ the connected component of $U(A)$
containing the identity.}

\end{NN}

\begin{NN}
{\rm

Let $A$ be a unital \CA. We say $B$ is a unital \SCA\, of $A$  if
$B\subset A$ is a unital \CA\, {\em and $1_B=1_A.$}

}

\end{NN}

\begin{df}
Let $A$ be a unital \CA\, and let $C$ be another \CA. Suppose that
$\phi_1, \phi_2: C\to A$ are two \hm s. We say that $\phi_1$ and
$\phi_2$ are asymptotically unitarily equivalent if there exists a
continuous path of unitaries  $\{u(t): t\in [0, \infty)\}\subset
A$ such that
\beq\label{Dasu-1}
\lim_{t\to\infty}u(t)^*\phi_2(c)u(t)=\phi_1(c)\tforal c\in C.
\eneq

We say that $\phi_1$ and $\phi_2$ are strongly asymptotically
unitarily equivalent if in (\ref{Dasu-1}) $u(0)=1_A.$  Of course,
$\phi_1$ and $\phi_2$ are strongly asymptotically unitarily
equivalent if they are asymptotically unitarily equivalent and if
$U(A)=U_0(A).$

\end{df}

\begin{df} Denote by ${\cal I}$ the class of those \CA s with the
form $\bigoplus_{i=1}^n M_{r_i}(C(X_i)),$ where each $X_i$ is a
finite CW complex with (covering) dimension no more than one.

A unital simple \CA\, $A$ is said to have tracial rank no more
than  one (we write $TR(A)\le 1$)  if for any finite subset
$\mathcal F\subset A$, $\ep>0$, any nonzero positive element $a\in
A$, there is a \SCA\,  $C\in\mathcal I$ such that if denote by $p$
the unit of $C$, then for any $x\in\mathcal F$, one has
\begin{enumerate}
\item $\|xp-px\|\leq\ep,$ \item there is $b\in C$ such that
$\|b-pxp\|\leq \ep$  and \item $1-p$ is Murray-von Neumann
equivalent to a projection in $\overline{aAa}$.
\end{enumerate}
\end{df}

\begin{df}
For a supernatural number $\mathfrak p$, denote by $M_\fp$ the UHF
algebra associated with $\fp$ (see \cite{Dix}). We use $Q$
throughout the paper for the UHF-algebra with $(K_0(Q), K_0(Q)_+,
[1_Q])=(\Q, \Q_+, 1).$ We may identify $Q$ with the inductive
limit $\lim_{n\to\infty}(M_{n!},h_n),$ where $h_n: M_{n!}\to
M_{(n+1)!}$ is defined by $a\mapsto a\otimes 1_{n+1}$ for $a\in
M_{n!}.$

Let ${\cal A}$ be the class of all unital separable simple
amenable \CA s $A$ in ${\cal N}$ for which $TR(A\otimes
M_{\mathfrak{p}})\le 1$ for some  supernatural number
${\mathfrak{p}}$ of infinite type.
\end{df}

\begin{df}\label{DK}
Let $A$ be a unital stably finite separable simple amenable \CA.
Denote by $T(A)$ the tracial state space of $A.$ We also use
$\tau$ for $\tau\otimes {\rm Tr}$ on $A\otimes M_k$ for any
integer $k\ge 1,$ where ${\rm Tr}$ is the standard trace on $M_k.$

By $\mathrm{Ell}(A)$ we mean the following:
$$
(K_0(A), K_0(A)_+, [1_A], K_1(A), T(A), r_A),
$$
where $r_A: T(A)\to S_{[1_A]}(K_0(A))$ is a surjective continuous
affine map such that $r_A(\tau)([p])=\tau(p)$ for all projections
$p\in A\otimes M_k,$ $k=1,2,...,$ where $S_{[1_A]}(K_0(A))$ is the
state space of $K_0(A).$

Suppose that $B$ is another stably finite unital separable simple
\CA. A map $\Lambda: \mathrm{Ell}(A)\to \mathrm{Ell}(B)$ is said
to be a \hm\, if $\Lambda$ gives an order \hm\, $\lambda_0:
K_0(A)\to K_0(B)$ such that $\lambda_0([1_A])=[1_B],$ a \hm\,
$\lambda_1: K_1(A)\to K_1(B),$ a continuous affine map
$\lambda_{\rho}': T(B)\to T(A)$ such that
$$
\lambda_{\rho}'(\tau)(p)=r_B(\tau)(\lambda_0([p]))
$$
for all projection in $A\otimes M_k,$ $k=1,2,...,$ and for all
$\tau\in T(B).$

We say that such $\Lambda$ is an isomorphism, if $\lambda_0$ and
$\lambda_1$ are isomorphisms and $\lambda_\rho'$ is a affine
homeomorphism. In this case, there is an affine homeomorphism
$\lambda_\rho: T(A)\to T(B)$ such that
$\lambda_\rho^{-1}=\lambda_\rho'.$

\end{df}

\begin{thm}[Corollary 11.9 of \cite{Lnclasn}]\label{MTQ}
Let $A,\, B\in {\cal A}.$ Then
$$
A\otimes {\cal Z}\cong B\otimes {\cal Z}
$$
if
$$
\mathrm{Ell}(A\otimes {\cal Z})=\mathrm{Ell}(B\otimes {\cal Z}).
$$
\end{thm}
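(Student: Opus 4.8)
The plan is to reduce the classification of the $\mathcal{Z}$-stabilizations to the already-available classification of unital simple separable amenable \CA s of tracial rank at most one, and then to reassemble the two resulting UHF-level isomorphisms along the dimension-drop presentation of $\mathcal{Z}$. First I would fix two relatively prime supernatural numbers $\fp,\fq$ of infinite type with $M_\fp\otimes M_\fq\cong Q$. Since $A,B\in{\cal A}$, we have $TR(A\otimes M_{\mathfrak r})\le 1$ for some infinite-type $\mathfrak r$; invoking the permanence of tracial rank at most one under tensoring by a UHF algebra upgrades this to $TR(A\otimes M_\fp)\le 1$, $TR(A\otimes M_\fq)\le 1$ and $TR(A\otimes Q)\le 1$, and likewise for $B$. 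Because $M_\fp$, $M_\fq$ and $Q$ are all $\mathcal{Z}$-stable, the algebras $(A\otimes{\mathcal Z})\otimes M_\fp$, $(A\otimes{\mathcal Z})\otimes M_\fq$, $(A\otimes{\mathcal Z})\otimes Q$ are canonically identified with $A\otimes M_\fp$, $A\otimes M_\fq$, $A\otimes Q$, and the same for $B$.

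Next I would extract an isomorphism at each UHF level. The hypothesis $\mathrm{Ell}(A\otimes{\mathcal Z})=\mathrm{Ell}(B\otimes{\mathcal Z})$ determines, after tensoring with the relevant UHF algebra, isomorphisms of Elliott invariants $\mathrm{Ell}(A\otimes M_\fp)\cong \mathrm{Ell}(B\otimes M_\fp)$ and $\mathrm{Ell}(A\otimes M_\fq)\cong \mathrm{Ell}(B\otimes M_\fq)$. Applying the classification theorem for unital simple \CA s of tracial rank at most one satisfying the UCT then produces isomorphisms $\phi_\fp\colon A\otimes M_\fp\to B\otimes M_\fp$ and $\phi_\fq\colon A\otimes M_\fq\to B\otimes M_\fq$ inducing these invariant maps.

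The decisive step is to glue $\phi_\fp$ and $\phi_\fq$ through the dimension-drop model of the Jiang--Su algebra (R{\o}rdam--Winter): realize $A\otimes{\mathcal Z}$ and $B\otimes{\mathcal Z}$ via the generalized dimension drop algebra with fiber $M_\fp\otimes M_\fq$ over $[0,1]$, whose endpoint algebras are governed by $M_\fp$ and $M_\fq$ and whose interior is governed by $Q$. Pushing $\phi_\fp$ and $\phi_\fq$ forward to homomorphisms $A\otimes Q\to B\otimes Q$, both induce the same map on $\mathrm{Ell}(A\otimes Q)$; hence, by the uniqueness theorem up to asymptotic unitary equivalence for maps into \CA s of tracial rank at most one (the technical engine of the cited work), these two maps are asymptotically unitarily equivalent. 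The continuous path of unitaries $\{u(t):t\in[0,\infty)\}$ implementing that equivalence supplies exactly the connecting data joining $\phi_\fp$ at one endpoint to $\phi_\fq$ at the other along the interval, which assembles into a single isomorphism $A\otimes{\mathcal Z}\to B\otimes{\mathcal Z}$.

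I expect the main obstacle to be precisely this asymptotic-unitary-equivalence step. Matching the two UHF-level maps only up to approximate unitary equivalence would not suffice for the gluing, since a genuine continuous unitary path is required to respect the endpoint constraints of the dimension-drop algebra; one must therefore control the finer rotation-map invariant (the de la Harpe--Skandalis determinant, or $\overline{R}$) in addition to the $KL$-class, the tracial data, and $K_1$. The crux is to verify that the common invariant on $A\otimes{\mathcal Z}$ forces agreement of this secondary invariant for the two induced maps $A\otimes Q\to B\otimes Q$, and that the implementing homotopy can be carried out compatibly with the boundary conditions $f(0)\in M_\fp\otimes 1$, $f(1)\in 1\otimes M_\fq$; this compatibility is the technical heart of the argument.
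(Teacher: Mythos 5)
First, a point of calibration: the paper contains no internal proof of Theorem \ref{MTQ} at all --- it is imported verbatim as Corollary 11.9 of \cite{Lnclasn} and used as a black box (together with Theorem \ref{MT3}) to derive Theorems \ref{MT1} and \ref{MT2}. So your sketch must be measured against the cited argument, and there your overall architecture is the right one: the proof in \cite{Lnclasn} (building on Winter's localization method \cite{W2}, \cite{Lnapp}) does classify at two relatively prime infinite-type UHF legs and glue along the dimension-drop presentation of ${\cal Z}$ via $Z_{\fp,\fq}$, with the asymptotic-unitary-equivalence uniqueness theorem and the rotation map $\overline{R}$ as the technical engine. You also correctly located the crux (approximate versus asymptotic unitary equivalence compatible with the endpoint conditions).

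There is, however, a genuine gap at your very first reduction. From ${\rm TR}(A\otimes M_{\mathfrak r})\le 1$ for \emph{some} infinite-type $\mathfrak r$, permanence of tracial rank under tensoring by a UHF algebra yields ${\rm TR}(A\otimes M_{\mathfrak s})\le 1$ only for supernatural numbers $\mathfrak s$ \emph{divisible} by $\mathfrak r$. It does not reach a relatively prime pair $(\fp,\fq)$ with $M_\fp\otimes M_\fq\cong Q$: coprimality forces at least one of $\fp,\fq$ to omit infinitely many primes of $\mathfrak r$ whenever $\mathfrak r$ involves infinitely many primes. This is not a corner case --- it is precisely the situation created by this paper, where membership in ${\cal A}$ is established only through ${\rm TR}(A\otimes Q)\le 1$ (Theorem \ref{MT3}), i.e. $\mathfrak r$ is the universal supernatural number, and then \emph{no} coprime pair is reachable by permanence. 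What is actually needed is the nontrivial transfer result in \cite{Lnclasn} (obtained via the tracial-rank-one classification together with range-of-invariant results as in \cite{LZ2}), to the effect that one infinite-type leg with tracial rank at most one forces all infinite-type legs; your one-line ``permanence'' claim silently substitutes for this and, as stated, is false (tensoring $A\otimes M_{\mathfrak r}$ by UHF algebras can never shed the primes of $\mathfrak r$). Secondly, in the gluing step you assert that agreement of the induced maps on $\mathrm{Ell}(A\otimes Q)$ yields asymptotic unitary equivalence; it yields only approximate unitary equivalence, and in \cite{Lnclasn} one must first correct one of the two maps by composing with an automorphism realizing prescribed $K_1$-data and rotation data (using divisibility of the relevant obstruction groups) so that $\phi^{\ddagger}$ and $\overline{R}$ match before the continuous path implementing the equivalence exists. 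You flag this honestly as the main obstacle, but the proposal assumes rather than supplies it, so as a proof it is incomplete at exactly the two places where the cited work does real work.
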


\section{Approximately unitary equivalence}

The following is known.

\begin{lem}\label{asy}
Let $\phi, \psi: Q\to Q$ be two unital \hm s. Then they are
strongly asymptotically unitarily equivalent.

\end{lem}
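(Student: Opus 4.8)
The plan is to produce, at each finite stage of the UHF inductive limit, an \emph{exact} intertwining unitary and then to splice these together, so that no quantitative estimate is needed. Since $Q$ is simple, both $\phi$ and $\psi$ are unital monomorphisms. Writing $Q=\overline{\bigcup_n M_{n!}}$ as in the text, the restrictions $\phi|_{M_{n!}}$ and $\psi|_{M_{n!}}$ are two unital embeddings of the full matrix algebra $M_{n!}$ into $Q$. Any two unital embeddings of $M_m$ into a UHF algebra are unitarily equivalent (a minimal projection is sent to a projection of trace $1/m$ in either case, so they agree on $K_0$, and the uniqueness theorem for finite-dimensional subalgebras of UHF algebras applies). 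Hence I choose unitaries $w_n\in Q$ with $w_n^*\psi(c)w_n=\phi(c)$ for all $c\in M_{n!}$, taking $w_0=1_Q$ (the case $M_{0!}=\C 1_Q$ being trivial).

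Next I set $x_n:=w_nw_{n+1}^*$. For $c\in M_{n!}\subset M_{(n+1)!}$ one has $w_n^*\psi(c)w_n=\phi(c)=w_{n+1}^*\psi(c)w_{n+1}$, so $x_n^*\psi(c)x_n=\psi(c)$; that is, $x_n$ lies \emph{exactly} in the relative commutant $R_n:=\psi(M_{n!})'\cap Q$. Because $\psi(M_{n!})$ is a unital full matrix subalgebra of the UHF algebra $Q$, its relative commutant $R_n$ is again a UHF algebra, unital in $Q$, so $U(R_n)=U_0(R_n)$. I therefore choose a continuous path of unitaries $\{y_n(t):t\in[n,n+1]\}$ inside $R_n$ with $y_n(n)=x_n$ and $y_n(n+1)=1_Q$.

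Finally I define $u(t)=y_n(t)w_{n+1}$ for $t\in[n,n+1]$, $n=0,1,2,\dots$. The endpoints agree, $u(n)=x_nw_{n+1}=w_n$ and $u(n+1)=w_{n+1}$, so $\{u(t):t\in[0,\infty)\}$ is a continuous path of unitaries with $u(0)=w_0=1_Q$. Since each $y_n(t)$ commutes with $\psi(M_{n!})$, for $c\in M_{n!}$ we get $u(t)^*\psi(c)u(t)=w_{n+1}^*\psi(c)w_{n+1}=\phi(c)$ for every $t\in[n,n+1]$, and, as $M_{n!}\subset M_{m!}$ for $m\ge n$, this equality persists for all $t\ge n$. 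Consequently $u(t)^*\psi(c)u(t)\to\phi(c)$ for every $c$ in the dense set $\bigcup_nM_{n!}$, and hence, by a standard uniform-boundedness argument, for all $c\in Q$. This exhibits the asymptotic unitary equivalence with $u(0)=1_Q$; strong asymptotic unitary equivalence also follows from the remark that $U(Q)=U_0(Q)$. The one substantive ingredient, and thus the main obstacle, is the uniqueness of unital embeddings of a full matrix algebra into a UHF algebra together with the fact that such relative commutants are themselves UHF (and so have connected unitary group); granting these, the path is assembled with exact intertwinings and no $\ep$-estimates are required.
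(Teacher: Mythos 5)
Your proof is correct, and it takes a genuinely different route from the paper's. The paper argues $K$-theoretically: since $\phi$ and $\psi$ are unital they induce the same map on $K_0(Q)=\Q$, divisibility of $\Q$ together with $K_1(Q)=\{0\}$ gives $[\phi]=[\psi]$ in $KK(Q,Q)$, and then Corollary 11.2 of \cite{Lnasym} --- a deep uniqueness theorem on asymptotic unitary equivalence --- yields the conclusion in two lines. You instead exploit the AF structure of $Q$ directly: exact unitary equivalence of the finite-stage restrictions $\phi|_{M_{n!}}$ and $\psi|_{M_{n!}}$ (trace/$K_0$ classification of projections in $Q$ plus the standard matrix-unit intertwiner), the exact membership $x_n=w_nw_{n+1}^*\in R_n=\psi(M_{n!})'\cap Q$, and a path in $U(R_n)$ from $x_n$ to $1$, spliced into a global path with $u(0)=1_Q$; your endpoint checks and the $\ep/3$ passage from $\bigcup_n M_{n!}$ to all of $Q$ are sound, and your construction even achieves \emph{exact} intertwining of each $M_{n!}$ for all large $t$, which is stronger than the asymptotic statement. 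The one assertion you should justify with a line is that $R_n$ is UHF (or at least has $U(R_n)=U_0(R_n)$): the matrix-unit decomposition gives $Q\cong M_{n!}\otimes R_n$ and $R_n\cong eQe$ with $e=\psi(e^{(n)}_{1,1})$ of trace $1/n!$; under an isomorphism $Q\cong M_{n!}\otimes Q$ such an $e$ is equivalent to $e_{1,1}\otimes 1_Q$, so $R_n\cong Q$ --- or, more cheaply, $R_n$ is a corner of an AF algebra, hence AF, hence has connected unitary group, which is all you actually use. In exchange for this small amount of bookkeeping, your argument is elementary and self-contained, avoiding $KK$ and the classification machinery of \cite{Lnasym} entirely; the paper's reduction, though it leans on heavy citations, is the one that generalizes beyond UHF targets and matches the toolkit used throughout the paper (compare Lemma \ref{almostcommut}, which similarly quotes \cite{Lnhomp} after a Bott-map computation).
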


\begin{proof}
Since $\phi$ and $\psi$ are unital, one computes that $\phi$ and
$\psi$ induce the same identity map on $K_0(Q).$ Since $K_0(Q)$ is
divisible and $K_1(Q)=\{0\},$ one then checks that
$$
[\phi]=[\psi]\,\,\,{\rm in}\,\,\, KK(Q,Q).
$$
With $K_1(Q)=\{0\},$  it follows from Corollary 11. 2 of
\cite{Lnasym} that $\phi$ and $\psi$ are strongly asymptotically
unitarily equivalent.

\end{proof}

\begin{lem}\label{almostcommut}
For any $\ep>0$ and any finite subset ${\cal F}\subset Q,$ there
exists $\dt>0$ and a finite subset ${\cal G}\subset Q$ satisfying
the following: suppose that $u\in U(Q)$ and $\phi: Q\to Q$ is a
unital \hm\, such that

\beq \|[u, \, \phi(b)]\|<\dt\tforal b\in {\cal G}. \eneq

Then there exists a continuous path of unitaries $\{u(t): t\in
[0,1]\}$ in $Q$ such that $u(0)=u,$ $u(1)=1,$

\beq
\|[u(t),\, \phi(a)]\|<\ep\tforal a\in {\cal F}.
\eneq

Moreover,
$$
{\rm Length}(\{u(t)\})\le 2\pi.
$$

\end{lem}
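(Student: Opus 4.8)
The plan is to reduce to the case where ${\cal F}$ is a system of matrix units, to push $u$ into the relative commutant of the corresponding matrix subalgebra, and to connect it to $1$ inside that relative commutant, where everything automatically almost-commutes with $\phi({\cal F})$.

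First I would fix the data. Writing $Q=\lim_{n\to\infty}(M_{n!},h_n)$, choose a unital matrix subalgebra $M_N\subset Q$ (so $N=n!$ for a suitable $n$) such that every $a\in{\cal F}$ satisfies $\di(a,M_N)<\ep/20$; pick $b_a\in M_N$ with $\|a-b_a\|<\ep/20$, so that $\|\phi(a)-\phi(b_a)\|<\ep/20$ since $\phi$ is contractive. I would then take ${\cal G}=\{e_{ij}:1\le i,j\le N\}$, the standard matrix units of $M_N$, and leave $\dt>0$ to be chosen at the very end, small relative to $N$, $\ep$ and $\max_{a\in{\cal F}}\|a\|$. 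Suppose $\|[u,\phi(e_{ij})]\|<\dt$ for all $i,j$, and put $f_{ij}=\phi(e_{ij})$, a system of matrix units in $Q$ with $\sum_i f_{ii}=1$. The key device is the conditional expectation $E\colon Q\to \phi(M_N)'\cap Q$ given by $E(x)=\tfrac1N\sum_{k,l}f_{lk}\,x\,f_{kl}$. Using $\tfrac1N\sum_{k,l}f_{lk}f_{kl}=1$, a direct computation yields $u-E(u)=\tfrac1N\sum_{k,l}f_{lk}[f_{kl},u]$, whence $\|u-E(u)\|\le N\dt$. Since $E$ is contractive, $E(u)$ is almost a unitary (one estimates $\|E(u)^*E(u)-1\|\le 2N\dt$), so for $\dt$ small its polar part $v:=E(u)(E(u)^*E(u))^{-1/2}$ is a genuine unitary in $B:=\phi(M_N)'\cap Q$ with $\|u-v\|\le 3N\dt$.

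Next I would build the path. The point is that $B=\phi(M_N)'\cap Q$, being the relative commutant of a unitally embedded matrix algebra in the UHF algebra $Q$, is again a UHF algebra; in particular $Q\cong M_N\otimes B$ and $B=\lim M_{k_n}$. Hence $v$ can be joined to $1$ by a path inside $B$ of length at most $\pi+\ep_0$: approximate $v$ by a unitary $v'$ in some $M_{k_n}\subset B$, diagonalize $v'=e^{ih'}$ with $\|h'\|\le\pi$, and prepend a short correction from $v$ to $v'$. This path lies in $B$, so it commutes exactly with $\phi(M_N)$; therefore for $a\in{\cal F}$ its commutator with $\phi(a)$ equals its commutator with $\phi(a)-\phi(b_a)$, which is $<\ep/5$ in norm. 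To connect $u$ to $v$, since $\|u-v\|$ is small I would set $e^{ih_0}=uv^*$ with $\|h_0\|$ small and take $w(t)=e^{i(1-t)h_0}v$, a path of length $\|h_0\|$ that can be made arbitrarily short. Because $v$ commutes with $\phi(b_a)$ we have $\|[uv^*,\phi(b_a)]\|=\|[u,\phi(b_a)]\|$, which is bounded by a constant multiple of $N^2\dt$ (the constant depending on $\max_a\|a\|$); a power-series estimate then gives $\|[w(t),\phi(b_a)]\|\le e^{\|h_0\|}\|[h_0,\phi(b_a)]\|\to0$ as $\dt\to0$, so $\|[w(t),\phi(a)]\|<\ep$ as well. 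Concatenating the two paths, reparametrised to $[0,1]$, produces $\{u(t)\}$ with $u(0)=u$, $u(1)=1$, total length $<2\pi$, and $\|[u(t),\phi(a)]\|<\ep$ throughout.

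The only real work is the bookkeeping of constants: one must check that a single $\dt$, depending only on $N$ (hence only on ${\cal F}$ and $\ep$), simultaneously makes $v$ unitary, keeps $\|u-v\|$ small, and controls the commutators along the connecting path. I expect the functional-calculus commutator estimate, bounding $\|[h_0,\phi(b_a)]\|$ in terms of $\|[uv^*,\phi(b_a)]\|$, to be the main technical point; it is routine once one expands $h_0=-i\log(uv^*)$ in powers of $uv^*-1$, legitimate since $\|uv^*-1\|<1$, giving $\|[h_0,\phi(b_a)]\|\le \|[uv^*,\phi(b_a)]\|/(1-\|uv^*-1\|)$. The length bound $2\pi$ is then automatic, as the relative-commutant piece costs at most $\pi+\ep_0$ and the connecting piece can be made negligibly short.
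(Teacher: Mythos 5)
Your proof is correct, but it is genuinely different from the paper's. The paper disposes of the lemma in two lines of $K$-theory: since $K_1(Q)=\{0\}$ and $K_i(Q\otimes C(\mathbb{T}))\cong\Q$ is torsion free and divisible, the obstruction ${\rm Bott}(\phi,u)$ vanishes, and the path is then supplied by the Basic Homotopy Lemma of \cite{Lnhomp}, a heavy general result about almost-commuting unitaries in simple $C^*$-algebras. You instead exploit the concrete UHF structure of $Q$: taking ${\cal G}$ to be matrix units of an approximating $M_N$, averaging $u$ by the conditional expectation $E(x)=\frac{1}{N}\sum_{k,l}f_{lk}xf_{kl}$ onto $B=\phi(M_N)'\cap Q$ (your identity $u-E(u)=\frac{1}{N}\sum_{k,l}f_{lk}[f_{kl},u]$ and the resulting bounds $\|u-E(u)\|\le N\dt$, $\|u-v\|\le 3N\dt$ are all correct), connecting the polar part $v$ to $1$ inside $B$, and bridging $u$ to $v$ by a short exponential arc, with the commutator estimates $\|[h_0,\phi(b_a)]\|\le\|[u,\phi(b_a)]\|/(1-\|uv^*-1\|)$ handled exactly as needed. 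The one fact you use without proof --- that $B$ is again UHF --- deserves a sentence: $B\cong\phi(e_{11})Q\phi(e_{11})$ is a corner of $Q$, hence a unital simple AF algebra with $(K_0,K_0^+)\cong(\Q,\Q_+)$, so it is UHF by Elliott's classification of AF algebras; in fact mere AF-ness suffices for your finite-dimensional approximation of $v$ and the bound $\|h'\|\le\pi$. What each approach buys: yours is elementary and self-contained (no Bott maps, no citation of \cite{Lnhomp}) and actually yields the sharper length bound $\pi+\ep_0$ plus an arbitrarily short correction, well inside the stated $2\pi$; the paper's approach is essentially free given that the author already relies on \cite{Lnhomp} and \cite{Lnasym} elsewhere, and the cited lemma works far beyond the UHF setting (general $\phi:C(X)\to A$ into simple $C^*$-algebras, with an honest obstruction to analyze), whereas your averaging trick is tied to matrix units and an AF target.
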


\begin{proof}
Note that $K_1(Q)=\{0\}$ and $K_i(Q\otimes C(\T))\cong \Q$ is
torsion free divisible group,\, $i=0,1.$  One computes that
$$
{\rm Bott}(\phi,u)=0.
$$
It follows from \cite{Lnhomp} that such $\{u(t)\}$ exists.

\end{proof}

%
%


\begin{thm}\label{Appr}
Let $X$ be a compact subset of $[0,1]$ with $a=\inf\{t:t\in X\}$
and $b=\sup\{t:t\in X\}.$ Suppose that $\phi, \psi: Q\to C(X, Q)$
are two unital monomorphisms. Then $\phi$ and $\psi$ are
approximately unitarily equivalent. Moreover, if $\pi_a\circ
\phi=\pi_a\circ \psi$ (or $\pi_b\circ \phi=\pi_b\circ\psi,$ or
both hold) then there exists a sequence of unitaries $\{u_n\}\in
C(X, Q)$ such that $\pi_a(u_n)=1$ {\rm (}or $\pi_b(u_n)=1,$ or
$\pi_a(u_n)=\pi_b(u_n)=1)$ and
$$
\lim_{n\to\infty}{\rm ad} u_n\circ \phi(x)=\psi(x)\tforal x\in Q.
$$

Furthermore, if $F\subset Q$ is a full matrix algebra as a unital
\SCA, $\pi_a\circ \phi|_F=\pi_a\circ \psi|_F$ and $\pi_b\circ
\phi|_F=\pi_b\circ \psi|_F,$ one can find, for each $\ep>0,$  a
unitary $u$ such that
$$
\|{\rm ad}\, u\circ \phi(f)-\psi(f)\|<\ep
$$
for all $f\in F$ and $\pi_a(u)=\pi_b(u)=1.$

\end{thm}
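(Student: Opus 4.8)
The plan is to establish the quantitative ``Furthermore'' statement first and then deduce the qualitative assertions by a limiting argument. Fix $\ep>0$ and a finite subset $\mathcal F\subset Q$; since $Q=\lim_{n\to\infty}(M_{n!},h_n),$ after an $\ep/4$-perturbation we may assume $\mathcal F$ lies in the unit ball of a full matrix subalgebra $F=M_{n!}\subset Q$ with $1_F=1_Q,$ so it suffices to produce a unitary $u\in C(X,Q)$ with $\|\,{\rm ad}\,u\circ\phi(f)-\psi(f)\|<\ep$ for all $f$ in the unit ball of $F.$ The core observation is pointwise: for each $x\in X$ the maps $\pi_x\circ\phi,\pi_x\circ\psi\colon Q\to Q$ are unital \hm s, so by Lemma \ref{asy} they are strongly asymptotically unitarily equivalent; in particular, for any finite set and any tolerance there is a single unitary in $Q$ implementing the equivalence to within that tolerance. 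I would use this to choose exact or near-exact conjugators at finitely many base points and then interpolate continuously in $x.$

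To carry out the interpolation, choose $\eta>0$ (to be determined) and a finite $\eta$-dense subset $a=s_0<s_1<\cdots<s_L=b$ of $X,$ fine enough that $\phi(g)$ and $\psi(g)$ vary by less than a prescribed amount on each $[s_{i-1},s_i]\cap X$ for $g$ ranging over the matrix units of $F.$ At each $s_i$ pick a unitary $W_i\in U(Q)$ with $W_i^*\phi(f)(s_i)W_i=\psi(f)(s_i)$ for $f\in F$; such $W_i$ exists because any two unital embeddings of the full matrix algebra $F$ into $Q$ are unitarily equivalent. When a boundary condition such as $\pi_a\circ\phi|_F=\pi_a\circ\psi|_F$ holds we take $W_0=1$ (respectively $W_L=1$), which is then an exact conjugator at that endpoint. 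Set $V_i=W_{i-1}^*W_i.$ A direct computation using the pointwise conjugation identities and the small variation of $\phi(f),\psi(f)$ on $[s_{i-1},s_i]\cap X$ shows that $V_i$ almost commutes with the matrix subalgebra $\psi(F)(s_{i-1})\cong F$ of $Q.$ I would then connect $V_i$ to $1$ by a short path of unitaries that continues to almost commute with $\psi(F)(s_{i-1})$ throughout: writing $Q\cong F\otimes C$ with respect to $\psi(F)(s_{i-1}),$ where the relative commutant $C$ is again a UHF algebra with connected unitary group ($K_1(C)=0$), one perturbs $V_i$ to a unitary of the form $1\otimes v_i$ and uses a path in $1\otimes U(C),$ which commutes with $\psi(F)(s_{i-1})$ exactly. (Equivalently, when no matrix reduction is imposed one may invoke Lemma \ref{almostcommut}, with the homomorphism $\pi_{s_{i-1}}\circ\psi$ and the unitary $V_i,$ after enlarging the finite set at which the $W_i$ conjugate so as to contain the set $\mathcal G$ furnished by that lemma.) Denoting the resulting path by $g_i\colon[0,1]\to U(Q)$ with $g_i(0)=1,$ $g_i(1)=V_i,$ I define $u$ on $[s_{i-1},s_i]\cap X$ by $u(x)=W_{i-1}\,g_i\!\big(\tfrac{x-s_{i-1}}{s_i-s_{i-1}}\big).$

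The function $u$ is continuous on $X$ because the two definitions agree ($u(s_i)=W_i$) at shared base points, and $\pi_a(u)=W_0=1,$ $\pi_b(u)=W_L=1$ whenever the corresponding boundary hypotheses are in force. The intertwining estimate $\|u(x)^*\phi(f)(x)u(x)-\psi(f)(x)\|<\ep$ follows from a chain of approximations: replace $\phi(f)(x)$ by $\phi(f)(s_{i-1})$ (small variation), use $W_{i-1}^*\phi(f)(s_{i-1})W_{i-1}=\psi(f)(s_{i-1}),$ move $g_i$ past $\psi(f)(s_{i-1})$ (almost commutation), and replace $\psi(f)(s_{i-1})$ by $\psi(f)(x)$ (small variation). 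This proves the ``Furthermore'' statement. The first two assertions then follow by applying the construction to an increasing sequence of finite sets $\mathcal F_n$ with $\overline{\bigcup_n\mathcal F_n}=Q,$ matrix subalgebras $M_{n!}\uparrow Q,$ and tolerances $\ep_n\to0$: this yields unitaries $u_n\in C(X,Q)$ with ${\rm ad}\,u_n\circ\phi\to\psi$ pointwise, and the choice $W_0=1$ (or $W_L=1,$ or both) made possible by the hypothesis $\pi_a\circ\phi=\pi_a\circ\psi$ (which forces exact agreement on each $M_{n!}$) gives $\pi_a(u_n)=1$ (respectively $\pi_b(u_n)=1,$ or both).

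The main obstacle is the coordination in the middle step: one must choose the partition fine enough, and the perturbation of $V_i$ small enough, that the accumulated errors from the two small-variation replacements and from the almost-commutation of $g_i$ all stay below $\ep$ simultaneously; quantitatively this means fixing the modulus $(\dt,\mathcal G)$ of Lemma \ref{almostcommut} (or the tolerance in the $F\otimes C$ perturbation) first, and only then selecting $\eta.$ A secondary but essential point, which dictates the use of the relative-commutant splitting in the ``Furthermore'' part, is that there the boundary agreement is assumed only on the matrix subalgebra $F$; the connecting paths must therefore be built to commute with $\psi(F)$ alone, so that the identity remains a legitimate boundary conjugator at $a$ and $b.$ Finally, the fact that $X$ is merely a compact subset of $[0,1]$ rather than an interval causes no difficulty: one takes the base points $s_i$ in $X,$ and across the gaps of $X$ no continuity constraint is imposed, so the pieces may be defined independently and then restricted from $[a,b]$ to $X.$
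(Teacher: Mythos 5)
Your proof is correct in substance, but it takes a genuinely different route from the paper's. The paper proves the three assertions in the stated order: the plain approximate unitary equivalence is obtained by combining Lemma \ref{asy} (asymptotic unitary equivalence of unital endomorphisms of $Q$, via $KK$ and \cite{Lnasym}) with Lemma \ref{almostcommut} (the Bott-obstruction path lemma from \cite{Lnhomp}) to interpolate pointwise conjugators $v_i$ through paths $w_i(t)$ joining $u_{i-1}u_i^*$ to $1$; the boundary-normalized version is then deduced by a patching argument near the endpoints; and only at the end is the matrix-algebra case treated, via the exact global intertwiner $W_1=\sum_i\psi(e_{i,1})v\phi(e_{1,i})$ and a path in the relative commutant. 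You invert this order: you prove the ``Furthermore'' statement first by purely finite-dimensional means --- exact conjugators $W_i$ at the partition points (uniqueness, up to unitaries, of unital embeddings of a full matrix algebra into $Q$, which is the same $\sum_i\psi(e_{i,1})v\phi(e_{1,i})$ device the paper uses), perturbation of the almost-commuting $V_i=W_{i-1}^*W_i$ into the relative commutant $C$ with $U(C)$ connected --- and then derive the first two assertions by exhausting $Q$ by $M_{n!}$ with tolerances $\ep_n\to 0$, using that conjugation is isometric so control on the unit balls of $M_{n!}$ propagates to all of $Q$. This entirely avoids Lemmas \ref{asy} and \ref{almostcommut}; the paper itself remarks (in the paragraph following its proof) that Lemma \ref{almostcommut} can be avoided by such a modification, so you have in effect carried out that remark, at the price of the exhaustion step. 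Your route is more elementary and self-contained; the paper's route gives the statement for arbitrary finite subsets of $Q$ directly and reuses machinery needed elsewhere. One point you should tighten: your stated partition condition --- an $\eta$-dense set of points of $X$ with small variation of $\phi(g),\psi(g)$ on \emph{every} piece $[s_{i-1},s_i]\cap X$ --- is not literally achievable, since $X$ may cluster near both ends of a long gap, giving a piece with nonempty interior intersection but large variation; the correct dichotomy (as in the paper) is that each piece satisfies \emph{either} $(s_{i-1},s_i)\cap X=\emptyset$ \emph{or} small variation, arranged by adding the endpoints of all gaps of length $\ge\eta/4$ to the partition. Your closing remark that gap pieces impose no constraint shows you saw the issue, and with that standard fix the argument closes.
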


\begin{proof}
To simplify the notation, without loss of generality, we may
assume that $a=0$ and $b=1.$

 Fix $\ep>0$ and a finite subset ${\cal F}.$ Without loss of generality, we may assume that
 $\|f\|\le 1$ for all $f\in {\cal F}.$
Let $\dt>0$ and ${\cal G}$ be the finite subset required by
\ref{almostcommut} corresponding to $\ep$ and ${\cal F}.$ Without
loss of generality, we may assume that $\ep<\dt$ and ${\cal
F}\subset {\cal G}.$

Let $\phi, \psi: Q\to C(X, Q)$ be two  unital \hm s.  Denote by
$\pi_t: C(X, Q)\to Q$ the point-evaluation map at the point $t$
($\in X$). There is a partition:
$$
0=t_0<t_1\cdots <t_n=1
$$
with $t_i\in X$ such that either $(t_{i-1}, t_i)\cap X=\emptyset,$
or

\beq\label{T1-1}
\|\pi_{t_i}\circ \phi(b)-\pi_t\circ \phi(b) \|<\dt/16\andeqn
\|\pi_{t_i}\circ \psi(b)-\pi_t\circ \psi(b) \|<\dt/16
\eneq
for all $t\in [t_{i-1}, t_i]\cap X,$ $i=1,2,...,n$ and all $b\in
{\cal G}.$
Since $K_0(Q)=\Q,$ $[1_Q]=1$ and $K_1(Q)=\{0\},$ one computes that
$$
[\pi_t\circ \phi]=[\pi_t\circ \psi]\,\,\,{\rm in}\,\,\,
KK(Q,Q)\,\,\,\tforal t\in X.
$$
It follows from \ref{asy} that there is, for each $i,$ a unitary
$v_i\in Q$ such that
\beq\label{T1-2}
 \|{\rm ad}\, v_i\circ \pi_{t_i}\circ
\phi(b)-\pi_{t_i}\circ \psi(b)\|<\dt/16, \eneq $i=0,1,...,n.$ Put
$w_i=u_{i-1}u_i^*,$ if $(t_{i-1}, t_i)\cap X\not=\emptyset.$ Then,
if $(t_{i-1}, t_i)\cap X\not=\emptyset,$ \beq
w_i^*(\pi_{t_{i-1}}\circ \phi(b))w_i&\approx_{\dt/16}&
u_i(\pi_{t_{i-1}}\circ\psi(b))u_i^*\\
&\approx_{\dt/16}& u_i(\pi_{t_i}\circ \psi(b))u_i^*\\
&\approx_{\dt/16}& \pi_{t_i}\circ \phi(b)\\
&\approx_{\dt/16} & \pi_{t_{i-1}}\circ \phi(b)
\eneq
for all $b\in {\cal G}.$ It follows from \ref{almostcommut} that
there exists a continuous path of unitaries $\{w_i(t):t\in
[t_{i-1},t_i]\}$ in $Q$ such that $w_i(t_{i-1})=w_i,$ $w_i(t_i)=1$
and \beq\label{T1-3} \|[w_i(t),\, \pi\circ
\pi_{t_{i-1}}(a)]\|<\ep/16 \eneq for all $t\in [t_{i-1}, t_i]\cap
X$ and $a\in {\cal F},$ $i=1,2,...,n.$ Define $u(t_i)=u_i$ and (if
$(t_{i-1}, t_i)\cap X\not=\emptyset$) 
\beq\label{T1-4}
 u(t)=w_i(t)u_i\rforal t\in [t_{i-1},
t_i]\cap X
 \eneq
 Note
 that $u(t)$ is
continuous. Moreover, when $(t_{i-1}, t_i)\cap X\not=\emptyset,$
for $t\in [t_{i-1}, t_i]\cap X,$
\beq 
{\rm ad}\, u(t)\circ \pi_t\circ \phi(a) &\approx_{\ep/16}&
{\rm ad} \, u(t)\circ \pi_{t_{i}}\circ \phi(a)\\
&\approx_{\ep/16}& {\rm ad}\, u_i\circ \pi_{t_i}\circ \phi(a)
\\
&\approx_{\ep/16}& \pi_{t_i}\circ \psi(a)\\
&\approx_{\ep/16}&\pi_t\circ \psi(a)
 \eneq
for all $a\in {\cal F},$ where the first estimate follows from
(\ref{T1-1}), the second estimate follows from (\ref{T1-3}), the
third follows from (\ref{T1-2}) and the last one follows from
(\ref{T1-1}).  Note that $u(t)\in C(X, Q).$ It follows that $\phi$
and $\psi$ are approximately unitarily equivalent.

To prove the second part of the statement, we use what we have just
proved.
Let $\ep>0$ and ${\cal F}\subset Q$ be a finite subset. Let $\dt$
and ${\cal G}$ be required by \ref{almostcommut} for $\ep/4$ and
${\cal F}.$
Without loss of generality, we may assume that ${\cal F}\subset
{\cal G}.$
Put $\ep_1=\min\{\dt/4, \ep/4\}.$ There exists $\eta>0$ such that
\beq
\| \pi_t\circ \phi(a)-\pi_{t'}\circ \phi(a)\|&<&\ep_1/2\,\,\, \andeqn\\
\|\pi_t\circ \psi(a)-\pi_{t'}\circ \psi(a)\|&<&\ep_1/2 \eneq for
all $a\in {\cal G},$ whenever $|t-t'|\le \eta$ and $t, t'\in X.$

From what we have shown, there exists a unitary $w\in C(X, Q)$
such that
$$
\|{\rm ad}\, w\circ \psi(a)-\phi(a)\|<\ep_1/2\tforal a\in {\cal
G}.
$$

If $0$ is an isolated point in $X,$ then we may assume that
$\pi_0(w)=1.$ Otherwise, let
$$
\eta_1=\sup\{t: t\in (0, \eta]\cap X\}.
$$

Since $\pi_0\circ \phi=\pi_0\circ \psi$ and $\pi_1\circ
\phi=\pi_1\circ \psi,$ we compute that
$$
\|\pi_{\eta_1}(w)\pi_{\eta_1}\circ \psi(a)-\pi_{\eta_1}\circ
\psi(a)\pi_{\eta_1}(w)\|<\ep_1
$$
for all $a\in {\cal G}.$ It follows from \ref{almostcommut} that
there exists a continuous path of unitaries $\{W(t): t\in [0,
\eta_1]\} \subset Q$ such that $W(0)=1,$ $W(\eta_1)=w(\eta_1).$
$$
\|[W(t),\, \psi(x)]\|<\ep/4\tforal x\in {\cal F}
$$
and $t\in [0,1].$  Define $v\in C(X, Q)$ by $\pi_t(v)=W(t)$ if
$t\in [0,\eta_1]\cap X$ and $\pi_t(v)=\pi_t\circ w$ for $t\in
(\eta_1, 1]\cap X.$ Then $v\in C(X, Q)$
$$
\|{\rm ad}\, v\circ \psi(a)-\phi(a)\|<\ep/2
$$
for all $a\in {\cal F}$ and $v(0)=1.$ We can deploy the same
argument for the end point $1.$ Thus we obtain a unitary $u\in
C(X, Q)$ such that $u(0)=1=u(1)$ and
$$
\|{\rm ad}\, u\circ \psi(a)-\phi(a)\|<\ep
$$
for all $a\in {\cal F}.$

Let $F\subset Q$ be a full
matrix algebra as a unital \SCA. Let $\{e_i,j: 1\le i,j\le k\}$ be
a system of matrix unit. From the above, there is a unitary $W\in
C(X, Q)$ such that
$$
\|W^*\psi(e_{i,j})W-\phi(e_{i,j})\|<{\ep\over{8k}}\,\,\,1\le i,j\le k.
$$
One has a unitary  $v\in C(X, Q)$
such that
$$
\phi(e_{1,1})=v^*\psi(e_{1,1})v.
$$
Define
$$
W_1=\sum_{i=1}^k \psi(e_{i,1})v\phi(e_{1,i}).
$$
Then $W_1$ is a unitary such that
$$
W_1\psi(f)W_1^*=\phi(f)\tforal f\in F.
$$
For any $\ep>0,$ there exists $1>\dt>0$ such that
\beq\label{add1}
\|W_1(t)-W_1(t')\|<\ep/4
\eneq
for all $t, t'\in X$ and $|t-t'|<\dt.$


Since
$U(\phi(e_{i,i})Q\phi(e_{i,i}))=U_0(\phi(e_{i,i})Q\phi(e_{i,i})),$
it is easy to obtain a continuous path of unitaries $\{v(t): t\in
[0, \dt]\}\subset U(Q)$ such that
$$
v(\dt)=W_1(0)^*, \,\,\, v(0)=1\andeqn u(t)\pi_a\circ
\phi(f)=\pi_a\circ \phi(f)u(t)
$$
for all $f\in F$ and all $t\in [0,\dt].$ Define now $u(t)\in C(X,
Q)$ by $u(t)=v(t)$ if $t\in [0, \dt)\cap X$ and
$u(t)=W_1({t-\dt\over{1-\dt}})^*$ if $t\in [\dt, 1]\cap X.$  By
(\ref{add1}),
$$
\|{\rm ad}\, u\circ \psi(f)-\phi(f)\|<\ep\tforal f\in F.
$$
One may arrange the other end point $b$ similarly.
\end{proof}

\begin{NN}{\rm
One can avoid to using Lemma \ref{almostcommut} by modifying the
last part of the above proof to obtain \ref{Appr}.

}
\end{NN}

\section{Approximate factorizations}

\begin{df}\label{DA}
Let $X$ be a connected compact metric space. We say that $X$ has
the property (A), if, for any $\xi\in X$ and any $\dt>0,$ there
exists $\dt>\dt_1>\dt_2>\dt_3>0$ satisfying the following: there
exists a homeomorphism $\chi: B(\xi, \dt_1)\setminus \{\xi\}\to
A(\xi, \dt_1, \dt_3)$ such that
$$
\chi|_{A(\xi,\dt_1, \dt_2)}={\rm id}_{A(\xi,\dt_1, \dt_2)}.
$$

Every locally Euclidean connected compact metric space has the
property (A). All connected simplicial complex has the property
(A).
\end{df}

\begin{df}

{\rm Let $X$ be a compact metric space and let $k\ge 1$ be an
integer. Fix  $\xi_1,\xi_2,...,\xi_n\in X$ and fix positive
integers  $m_1, m_2,...,m_n$  for which $m_j|k,$ $j=1,2,...,n.$
Denote by
$$
D_{k,n}=D_k(X, \xi_1, \xi_2,...,\xi_n, m_1,m_2,...,m_n).
$$

If $X_1, X_2,...,X_n\subset X$ are disjoint compact subsets, put
$$
{\bar D}_{k,\{X_j\},n}=D_k(X, X_1, X_2,...,X_n, m_1, m_2,...,m_n),
$$
when $\{X_j: 1\le j\le n\}$ and $\{m_j: 1\le j\le n\}$ are
understood.

Let $P\in {\bar D}_{k,\{X_j\},n}$ be a projection. Then
$f(x)\cdot P(x)\in P{\bar D}_{k, n, \{X_j\}}P$ for all $f\in
C(X).$ Similarly, if $P\in D_{k,n}$ is a projection, then
$f(x)\cdot P(x)\in PD_{k,n}P.$

Denote by $C_{k, \{X_j\}, n}$ the subalgebra
$$
\{f\cdot 1_{{\bar D}_{k,n, \{X_j\}}}:  f\in C(X)\}.
$$
Then $C_{k,n, \{X_j\}}P$ is the center of $P{\bar D}_{k,n,
\{X_j\}}P.$

Similarly, set
$$
C_{k,n}=\{f\cdot 1_{D_{k,n}}: f\in C(X)\}.
$$
Then $C_{k,n}\cdot P$ is the center of $PD_{k,n}P.$

}

\end{df}

\begin{lem}\label{FL1}
Let $X$ be a connected compact metric space with property (A), let
$\xi_1, \xi_2,...,\xi_n\in X,$ let $k, m_1, m_2,...,m_n\ge 1$ be
integers with $m_j|k,$ $j=1,2,...,n.$ Let $P\in D_{k,n} $ be a
projection.

Then, for any $\ep>0,$ any finite subset ${\cal F}\subset
PD_{k,n}P\otimes Q$
there exists a unital monomorphism
$$
\psi:PD_{k,n-1}P\otimes Q\to PD_{k,n}P\otimes Q
$$
such that
$$
{\rm id}_{PD_{k,n}P\otimes Q}\approx_{\ep} \psi\circ \imath\,\,\,
{\rm on}\,\,\, {\cal F},
$$
where $$\imath: PD_{k,n}P\otimes Q\to PD_{k,n-1}P\otimes Q$$
 is the embedding.

Moreover, $\psi$ maps $(C_{k,n-1}P)\otimes 1_Q$ into
$(C_{k,n}P)\otimes 1_Q.$

\end{lem}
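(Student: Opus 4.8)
The plan is to leave $g$ unchanged away from $\xi_n$ and, on a small ball around $\xi_n$, to post--compose fibrewise with a radial deformation of the identity into a unital \hm\ that collapses the fibre $M_k\otimes Q$ onto $B_n\otimes Q$. At $\xi_n$ this collapse forces the extra constraint $\psi(g)(\xi_n)\in B_n\otimes Q$, while on elements of $PD_{k,n}P\otimes Q$ it should move nothing appreciably. First I would fix the scale: choose $\dt>0$ so small that $B(\xi_n,\dt)$ misses $\xi_1,\dots,\xi_{n-1}$ and so that $\|a(x)-a(\xi_n)\|<\ep/8$ for all $a\in{\cal F}$ and all $x\in B(\xi_n,\dt)$, and apply property (A) (\ref{DA}) at $\xi_n$ to obtain $\dt>\dt_1>\dt_2>\dt_3>0$ together with the homeomorphism $\chi$. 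I would use $\chi$, and the fact that $U(Q)=U_0(Q)$, to trivialise $P$ on $B(\xi_n,\dt_1)$ by an inner unitary that is $1$ on $A(\xi_n,\dt_1,\dt_2)$, so that the corners $P(x)(M_k\otimes Q)P(x)$ form a constant field near $\xi_n$ and a single fibrewise map glues continuously; property (A) also supplies the annular/collar structure carrying the radial parameter below.

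For the fibrewise ingredient, since $m_n\mid k$ I write $M_k=M_{m_n}\otimes M_s$ with $s=k/m_n$, so that $M_k\otimes Q\cong M_{m_n}\otimes M_s\otimes Q$ and $B_n\otimes Q=M_{m_n}\otimes 1_s\otimes Q$. As $M_s\otimes Q\cong Q$, there is a unital \hm\ $\rho\colon M_k\otimes Q\to M_k\otimes Q$ with image $B_n\otimes Q$; invoking \ref{asy} for the isomorphism $M_s\otimes Q\cong Q$, I may arrange that $\rho$ fixes the finite set $\{a(\xi_n):a\in{\cal F}\}\subset B_n\otimes Q$ to within $\ep/8$. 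By \ref{asy} again, $\rho$ and ${\rm id}$ are strongly asymptotically unitarily equivalent, yielding a point--norm continuous path of unital \hm s $\{\rho_t\}_{t\in[0,\infty)}$ with $\rho_0=\rho$ and $\rho_t\to{\rm id}$. I would set $\psi(g)(x)=g(x)$ for $\dist(x,\xi_n)\ge\dt_1$ and $\psi(g)(x)=\rho_{t(r)}(g(x))$ for $r=\dist(x,\xi_n)<\dt_1$, where $t\colon[0,\dt_1)\to[0,\infty)$ is continuous with $t(0)=0$ and $t(r)\to\infty$ as $r\to\dt_1^-$.

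Continuity is clear off $\xi_n$; at $\xi_n$ the value is $\rho(g(\xi_n))\in B_n\otimes Q$, and at radius $\dt_1$ it matches $g$ since $\rho_t\to{\rm id}$, so $\psi$ is a unital monomorphism into $PD_{k,n}P\otimes Q$. The constraints at $\xi_1,\dots,\xi_{n-1}$ persist because $\psi={\rm id}$ near those points, and since each $\rho_t$ is unital on the corner it fixes $P$ and the central scalars, so $\psi$ carries $(C_{k,n-1}P)\otimes 1_Q$ into $(C_{k,n}P)\otimes 1_Q$. For the estimate, if $a\in PD_{k,n}P\otimes Q$ then $\psi(a)=a$ off the ball, while for $r<\dt_1$ one has $\psi(a)(x)=\rho_{t(r)}(a(x))\approx_{\ep/8}\rho_{t(r)}(a(\xi_n))$ with $a(\xi_n)\in B_n\otimes Q$, so everything reduces to bounding $\|\rho_t(a(\xi_n))-a(\xi_n)\|$ uniformly in $t$.

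The hard part is exactly this uniform bound, and it is the genuine obstacle. A path of \hm s running from ${\rm id}$ to the collapse $\rho$ must enlarge the image from $B_n\otimes Q$ back to all of $M_k\otimes Q$, so it cannot fix $B_n\otimes Q$ on the nose, and the raw asymptotic path from \ref{asy} will in general drag the elements $a(\xi_n)$ far away at intermediate times. The remedy I would pursue exploits that $\rho$ and ${\rm id}$ already agree to within $\ep/8$ on the finite set $\{a(\xi_n)\}$: the unitary $w$ implementing $\mathrm{ad}\,w^*\circ\rho\approx_{\ep/8}{\rm id}$ on this set then automatically almost commutes with it, whence \ref{almostcommut} connects $w$ to $1$ through unitaries of length $\le 2\pi$ that keep almost commuting with $\{a(\xi_n)\}$; reparametrising $t(r)$ along this controlled path keeps $\|\rho_{t(r)}(a(\xi_n))-a(\xi_n)\|<\ep$ throughout. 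Reconciling the two demands on the path---that it reach the identity (to glue with $g$ outside the ball) yet displace the finitely many relevant elements of $B_n\otimes Q$ by less than $\ep$---is the crux, and it is precisely this equivariant interpolation, secured by property (A) (for the collar and the trivialisation of $P$) together with \ref{asy} and \ref{almostcommut}, that makes the construction go through.
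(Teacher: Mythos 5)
Your construction is correct in outline and runs on the same engine as the paper's proof: localize at $\xi_n$ with a collar from property (A) (\ref{DA}), collapse the fibre $\cong Q$ onto the $B_n$-corner, interpolate radially between the collapse at the centre and the identity at the collar boundary via the asymptotic unitary equivalence of \ref{asy}, and repair the intermediate drag with almost-commuting unitary paths. But the packaging of the crux is genuinely different, so a comparison is worth recording. The paper applies property (A) a second time so that every function becomes constant ($=f(\xi_n)$) on a shell around $\xi_n$, normalizes the collapse $\lambda$ to agree \emph{exactly} with the identity on the finite-dimensional algebra $\pi_{\xi_n}({\tilde P}D_{k,n}{\tilde P}\otimes M_{m!})$ (exactness is available because unital \hm s of finite-dimensional algebras agreeing on $K_0$ are honestly unitarily equivalent, and one may assume ${\cal F}\subset PD_{k,n}P\otimes M_{m!}$), and then corrects the entire radial field at once by Theorem \ref{Appr}: the two maps $\psi_0(a)(t)=a$ and $\psi_1(a)(t)=u(t)^*\lambda(a)u(t)$ into $C(Y,Q)$ agree at both endpoints of the radial interval $Y$ on a full matrix algebra, so the ``Furthermore'' clause supplies a unitary $U_1\in C(Y,Q)$ pinned to $1$ at both ends conjugating one to within $\ep$ of the other. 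You instead inline the mechanism: since \ref{Appr} is itself proved from \ref{asy} together with \ref{almostcommut}, your direct route---splice an \ref{almostcommut}-path from $1$ to $u(T)$ (keeping controlled commutation with $\{a(\xi_n)\}$) onto the tail $\{u(t):t\ge T\}$ of the asymptotic path, whose control is automatic from the definition of the limit once $T$ is chosen large---is a legitimate special case, and it buys a more self-contained, purely fibre-level argument that avoids making the functions constant near $\xi_n$ (harmless, since fibrewise post-composition by \hm s is still a \hm\ and uniform continuity supplies the estimates). Three points you leave implicit should be made explicit, though none is a genuine gap: (i) the tolerance and test set for which you pre-arrange $\rho\approx{\rm id}$ must be the $(\dt,{\cal G})$ demanded by \ref{almostcommut}, not merely $(\ep/8,{\cal F})$---repairable, since conjugating $\rho$ inside $B_n\otimes Q$ via \ref{asy} accommodates any finite set and tolerance; (ii) the concatenation just described is the actual content of your ``reparametrising $t(r)$'' sentence, as the \ref{almostcommut}-path alone never reaches the identity \hm; and (iii) the trivialisation of $P$ near $\xi_n$ must be compatible with the fibre at $\xi_n$ itself---the collapse should be a unital endomorphism of the honest corner $P(\xi_n)(M_k\otimes Q)P(\xi_n)$ with image in the corresponding $B_n$-corner---which is exactly what the paper's bookkeeping with ${\tilde P}$, $P_1$ and the unitary $U_4$ accomplishes.
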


\begin{proof}

We may  assume that $\|f\|\le 1$ for all $f\in {\cal F}.$
Fix $\ep>0$ and a finite subset ${\cal F}\subset PD_{k,n}P\otimes
Q.$ We may assume that $P\in {\cal F}.$ We may  assume that
$\|f\|\le 1$ for all $f\in {\cal F}$ and ${\cal F}\subset
PD_{k,n}P\otimes M_{m!},$ where $M_{m!}$ is a unital \SCA\, of
$Q.$
There is $\dt>0$ such that
$$
\|f(x)-f(x')\|<\ep/16\tforal f\in {\cal F}
$$
whenever ${\rm dist}(x,x')\le \dt.$

Without loss of generality, we may also assume that ${\rm
dist}(\xi_j, \xi_n)>\dt,$ $j=1,2,...,n-1.$
There are $\dt>\dt_1>\dt_2>\dt_3>0$ such that there exists a
homeomorphism $\chi: B(\xi_n,\dt_1)\setminus \{\xi_n\}\to A(\xi_n,
\dt_1,\dt_3)$ such that $\chi|_{A(\xi_n, \dt_1, \dt_2)}={\rm
id}|_{A(\xi_n, \dt_1, \dt_2)}.$


For each $f\in C(X, M_k)\otimes Q$ define ${\tilde f}(x)=f(x)$ if
${\rm dist}(x, \xi_n)>\dt_2,$ ${\tilde f}(x)=f(\xi_n)$ if ${\rm
dist}(x, \xi_n)\le \dt_3$ and ${\tilde f}(x)=f(\chi^{-1}(x))$ if
$x\in A(\xi_n, \dt_2, \dt_3).$  Note that, for each $f\in {\cal
F},$
\beq\label{FL1-1}
 \|{\tilde f}-f\|<\ep/16.
\eneq
 Moreover ${\tilde P}$ is also a projection.
Note that, for each $x\in X, $ $\pi_x({\tilde P}D_{k, m}{\tilde
P}\otimes Q)\cong Q$ for $m\ge 1.$

There are $\dt_3>\dt_4>\dt_5>0$ such that there is a homeomorphism
$\chi_1: B(\xi_n, \dt_3)\setminus \{\xi_n\}\to A(\xi_n, \dt_3,
\dt_5)$ such that  $(\chi_1)|_{A(\xi_n,\dt_3, \dt_4)}={\rm
id}|_{A(\xi_n,\dt_3, \dt_4)}.$

Let $0<\dt_6<\dt_5.$
 There is a unital isomorphism  $\lambda: \pi_{\xi_n}({\tilde P}D_{k,
n-1}{\tilde P}\otimes Q)\to \pi_{\xi_n}({\tilde P}D_{k, n}{\tilde
P}\otimes Q).$  Note that $\pi_{\xi_n}({\tilde P}D_{k,n}{\tilde
P}\otimes Q)$ is a unital \SCA. In particular.
\beq\label{FL1-1+}
(\lambda|_{\pi_{\xi_n}({\tilde P}D_{k,n}{\tilde P}\otimes
Q)})_{*0}={\rm id}_{K_0(\pi_{\xi_n}({\tilde P}D_{k,n}{\tilde
P}\otimes Q))}.
\eneq
Therefore, there is a unitary $U_0\in \pi_{\xi_n}({\tilde P}D_{k,
n}{\tilde P}\otimes Q)$ such that
$$
{\rm ad}\, U_0\circ \lambda(a)=a
$$
for all $a\in \pi_{\xi_n}({\tilde P}D_{k,n}{\tilde P}\otimes
M_{m!}).$ To simplify the notation, we may assume that
\beq\label{FL1-++}
\lambda(a)=a\tforal a\in \pi_{\xi_n}({\tilde P}D_{k,n}{\tilde
P}\otimes M_{m!}).
\eneq

On the other hand, by embedding $\pi_{\xi_n}({\tilde
P}D_{k,n}{\tilde P}\otimes Q)$ into $\pi_{\xi_n}({\tilde
P}D_{k,n-1}{\tilde P}\otimes Q)$ unitally, we may view $\lambda$
maps $\pi_{\xi_n}({\tilde P}D_{k,n-1}{\tilde P}\otimes Q)$ into
$\pi_{\xi_n}({\tilde P}D_{k,n-1}{\tilde P}\otimes Q).$ Then
$$
(\imath\circ \lambda)_{*0}={\rm id}_{K_0(\pi_{\xi_n}({\tilde
P}D_{k,n-1}{\tilde P}\otimes Q))}.
$$
It then follows from \ref{asy}, there is a continuous path of
unitaries
$$
\{u(t): t\in [0, \dt_6)\}\subset \pi_{\xi_n}({\tilde P}D_{k,
n-1}{\tilde P}\otimes Q)
$$ such that $u(0)=1$ and
$$
\lim_{t\to \dt_6}{\rm ad}\, u(t)\circ \lambda(f)=f
$$
for all $f\in \pi_{\xi_n}({\tilde P}D_{k, n-1}{\tilde P}\otimes Q).$

Define a projection $P_1\in C(X, M_k)\otimes Q$ as follows:
$P_1(x)={\tilde P}(x)\otimes 1_Q$ for all ${\rm dist}(x, \xi_n)\ge
\dt_4,$ $P_1(x)={\tilde P}(\chi_1^{-1}(x))\otimes 1_Q,$ if $x\in
A(\xi_n, \dt_4, \dt_5)$ and
$$
P_1(x)=u^*(t)({\tilde P}(\xi_n)\otimes 1_Q)u(t)=P(\xi_n)\otimes
1_Q,
$$
if ${\rm dist}(x,\xi_n)=t$ for $0\le t\le \dt_6,$ and
$P_1(x)={\tilde P}(\xi_n)\otimes 1_Q$ if $\dt_5>{\rm
dist}(x,\xi_n)\ge \dt_6.$ Thus $P_1={\tilde P}\otimes 1_Q.$

 Define $\Phi: {\tilde P}D_{k,n-1}{\tilde P}\otimes Q=P_1(D_{k,n-1}\otimes Q)P_1\to P_1(D_{k, n}\otimes Q)P_1$ as follows:
$\Phi(f)(x)=f(x),$ if ${\rm dist}(x, \xi_n)\ge \dt_4,$
$\Phi(f)(x)=f(\chi_1^{-1}(x)),$ if $x\in A(\xi_n, \dt_4, \dt_5),$
$\Phi(f)(x)=f(\xi_n),$ if $\dt_6\le {\rm dist}(x, \xi_n)<\dt_5$
and
$$
\Phi(f)(x)=u^*(t)\lambda(f)u(t)
$$
if ${\rm dist}(x, \xi_n)=t$ for $0\le t< \dt_6.$ It follows that
\beq\label{FL1-2}
\|\Phi({\tilde f})(x)-{\tilde f}(x)\|<\ep/16
\eneq
for all $f\in {\cal F}$ and for those $x\in X$ with ${\rm dist}(x,
\xi_n)\ge\dt_6.$

Define a continuous map $\Gamma: B(\xi_n, \dt_5)\to [0,\dt_5]$ by
$\Gamma(x)={\rm dist}(x, \xi_n)$ for $x\in B(\xi_n, \dt_5).$ Put
$Y=\overline{\Gamma(B(\xi_n, \dt_6))}$ and
$Y_1=\overline{\Gamma(B(\xi_n, \dt_5)}.$
Define a projection $P_0\in C(Y, \pi_{\xi_n}(P_1(D_{k,n-1}\otimes
Q)P_1))$ by
$$
P_0(t)=u^*(t)({\tilde P}(\xi_n)\otimes 1_Q)u(t).
$$
We note that $P_0(t)={\tilde P}(\xi_n)\otimes 1_Q$ for all $t\in
Y.$

Define $\psi_0: \pi_{\xi_n}({\tilde P}D_{k, n}{\tilde P}\otimes
Q)\to C(Y, P_1(\xi_n)M_k(Q)P_1(\xi_n))\, (\cong C(Y, Q))$ by
$$
\psi_0(a)(t)=a
$$
for all $a\in \pi_{\xi_n}({\tilde P}D_{k, n}{\tilde P}\otimes Q).$
Define $\psi_1: \pi_{\xi_n}({\tilde P}D_{k, n}{\tilde P}\otimes
Q)\to C(Y, P_1(\xi_n)M_k(Q)P_1(\xi_n))$ by
$$
\psi_1(a)(t)=u^*(t)\lambda(a)u(t)
$$
for $a\in \pi_{\xi_n}({\tilde P}D_{k, n}{\tilde P}\otimes Q),$
$t\in Y\setminus \{1\}$ and $\psi(a)(\dt_6)=a$ for all $a\in
\pi_{\xi_n}({\tilde P}D_{k,n}{\tilde P}\otimes Q).$ It follows
from \ref{Appr} that there exists a unitary
$$
U_1\in C(Y, P_1(\xi_n)M_k(Q)P_1(\xi_n))
$$
such that $U_1(0)=U_1(\dt_6)=P_1(\xi_n)$ and
\beq\label{FL1-2+}
(U_1)^*\psi_1U_1\approx_{\ep/16} \psi_0\,\,\,{\rm
on}\,\,\,\{f(\xi_n): f\in {\cal F}\}.
\eneq
Put $U_2=U_1+(1_{M_k(Q)}-P_1(\xi_n)).$ Let $U_3\in C(X, M_k(Q))$
be defined by $ U_3(x)=1 $ if ${\rm dist}(x, \xi_n)\ge \dt_6$ and
$U_3(x)=U_2(t)$ if ${\rm dist}(x, \xi_n)=t$ for $t\in [0,\dt_6).$

We have, by (\ref{FL1-2}),
\beq\label{FL1-3}
\|U_3^*(x){\tilde \Phi}({\tilde f})U_3(t)-{\tilde f}(x)\|<\ep/16
\eneq
for all $f\in {\cal F}$ and $x\in X$ with ${\rm dist}(x, \xi_n)\ge
\dt_6,$ and, by (\ref{FL1-2+}),
\beq\label{FL1-4}
\|U_3^*(x)\Phi({\tilde f})U_3(x)-{\tilde f}(x)\|<\ep/16
\eneq
for all $f\in {\cal F}$ and $x\in X$ with ${\rm dist}(x,
\xi_n)<\dt_6.$

 Note that
 $$
 \|P-{\tilde P}\|<\ep/16.
 $$
There is a unitary  $U_4'\in C(X, M_k)$ such that
$$
\|U_4'-1\|<\sqrt{2}\ep/16
$$
and
$$
(U_4')^*{\tilde P}U_4'=P.
$$
(see Lemma 6.2.1 of \cite{Mb} for example). Then $U_4'(x)=P(x)$ if
${\rm dist}(x,\xi_n)\ge \dt_3$ and $U_4'(\xi_n)=P(\xi_n).$ Define
$U_4=(1_{C(X, M_k)\otimes Q}-1_{C(X,M_k)})+U_4''\in
C(X,M_k)\otimes 1_Q.$

Define $\psi: PD_{k, n-1}P\otimes Q\to PD_{k, n}P\otimes Q$ by
$$
\psi(f)={\rm ad}\, U_4 U_3\circ  \Phi(f)\tforal f\in
PD_{k,n-1}P\otimes Q.
$$

Now we estimate (by applying (\ref{FL1-3}) and \ref{FL1-4})) that
\beq
 \|f-\psi(f)\| &\le & \|f-{\tilde f}\|+\|{\tilde f}-\psi({\tilde
 f})\|\\
 &<& \ep/16 +\sqrt{2}\ep/16+\|{\tilde f}-U_3^*\Phi({\tilde
 f})U_3\|\\
 &<& {(1+\sqrt{2})\ep\over{16}}+2\ep/16<\ep
\eneq for all $f\in {\cal F}.$
Note that $\psi$ is a unital monomorphism.

This proves the first part of the lemma. 
The last part follows from the 
construction that $\psi$ maps $(C_{k, n-1}P)\otimes 1_Q$ into
$(C_{k,n}P)\otimes 1_Q.$

\end{proof}

\begin{lem}\label{FL2}
Let $X$ be a connected compact metric space with property (A), let
$\xi_1, \xi_2,...,\xi_n\in X,$ let $k, m_1, m_2,...,m_n\ge 1$ be
integers with $m_j|k,$ $j=1,2,...,n.$ Let $P\in D_k(X,
\xi_1,\xi_2,...,\xi_n, m_1, m_2,...,m_n)$ be a projection.
Then, for any $\ep>0,$ any finite subset ${\cal F}\subset PC(X,
M_k)P\otimes Q$
there exists a unital monomorphism
$$
\psi:PC(X, M_k)P\otimes Q\to PD_{k,n}P\otimes Q
$$
such that
$$
{\rm id}_{PD_{k,n}P\otimes Q}\approx_{\ep} \psi\circ \imath\,\,\,
{\rm on}\,\,\, {\cal F},
$$
where $$\imath: PD_{k,n}P\otimes Q\to PC(X, M_k)P\otimes Q$$
 is the embedding.

Moreover, $\psi$ maps $(C(X)\cdot P)\otimes 1_Q$ into
$(C_{k,n}P)\otimes 1_Q.$

\end{lem}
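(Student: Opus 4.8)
My plan is to obtain $\psi$ by removing the singular points $\xi_1,\dots,\xi_n$ one at a time, applying Lemma \ref{FL1} at each stage and composing. Write $D_{k,0}=C(X,M_k)$, and for $0\le j\le n$ let $D_{k,j}$ be the subhomogeneous algebra with the drop conditions imposed only at $\xi_1,\dots,\xi_j$, so that
$$
PD_{k,n}P\otimes Q\subset PD_{k,n-1}P\otimes Q\subset\cdots\subset PD_{k,0}P\otimes Q=PC(X,M_k)P\otimes Q,
$$
and $\imath$ is the inclusion of the smallest of these into the largest. As in Lemma \ref{FL1}, the finite set $\mathcal F$ is to be read as a subset of $PD_{k,n}P\otimes Q$ (this is forced by the statement $\mathrm{id}_{PD_{k,n}P\otimes Q}\approx_\ep\psi\circ\imath$ on $\mathcal F$), and I enlarge it so that $P\in\mathcal F$; note $\mathcal F$ then lies in every intermediate algebra $PD_{k,j}P\otimes Q$.

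For each $j$ with $1\le j\le n$, the pair $(D_{k,j-1},D_{k,j})$ differs exactly by the single drop condition at $\xi_j$. Hence Lemma \ref{FL1}, applied with $j$ in the role of its $n$ and $\xi_j$ the last singular point, produces a unital monomorphism
$$
\psi_j:PD_{k,j-1}P\otimes Q\to PD_{k,j}P\otimes Q
$$
with $\mathrm{id}_{PD_{k,j}P\otimes Q}\approx_{\ep/n}\psi_j\circ\imath_j$ on $\mathcal F$, where $\imath_j:PD_{k,j}P\otimes Q\to PD_{k,j-1}P\otimes Q$ is the inclusion; since $\imath_j(a)=a$ for $a\in\mathcal F$, this says $\psi_j\approx_{\ep/n}\mathrm{id}$ on $\mathcal F$. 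Lemma \ref{FL1} also gives $\psi_j\big((C_{k,j-1}P)\otimes 1_Q\big)\subset (C_{k,j}P)\otimes 1_Q$. I then set $\psi=\psi_n\circ\psi_{n-1}\circ\cdots\circ\psi_1$, a unital monomorphism from $PC(X,M_k)P\otimes Q$ into $PD_{k,n}P\otimes Q$.

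To verify $\psi\circ\imath\approx_\ep\mathrm{id}$ on $\mathcal F$, observe that for $a\in\mathcal F$ the element $\imath(a)$ is simply $a$ regarded in the larger algebra, so $\psi\circ\imath(a)=\psi_n\cdots\psi_1(a)$. I would prove by induction on $j$ that $\|\psi_j\cdots\psi_1(a)-a\|<j\ep/n$: each $\psi_j$ is contractive, so
$$
\|\psi_j(\psi_{j-1}\cdots\psi_1(a))-\psi_j(a)\|\le\|\psi_{j-1}\cdots\psi_1(a)-a\|<(j-1)\ep/n,
$$
while $\|\psi_j(a)-a\|<\ep/n$ because $a\in\mathcal F$; the triangle inequality closes the induction. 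Taking $j=n$ gives $\psi\circ\imath\approx_\ep\mathrm{id}$ on $\mathcal F$. The final assertion follows by composing the containments $\psi_j\big((C_{k,j-1}P)\otimes 1_Q\big)\subset (C_{k,j}P)\otimes 1_Q$, so that $\psi$ sends $(C(X)\cdot P)\otimes 1_Q=(C_{k,0}P)\otimes 1_Q$ into $(C_{k,n}P)\otimes 1_Q$.

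Since everything reduces to Lemma \ref{FL1}, I do not expect a genuinely new difficulty here. The only point requiring care is the bookkeeping of errors under composition, and this is exactly what contractivity of the $\psi_j$ controls: the mild obstacle is that each $\psi_j$ must be built to be $\ep/n$-close to the identity on the \emph{fixed} test set $\mathcal F$ rather than on the shifting intermediate images $\psi_{j-1}\cdots\psi_1(\mathcal F)$, and the argument above sidesteps this precisely by estimating $\psi_j$ against $\psi_j(a)$ and invoking $\|\psi_j(a)-a\|<\ep/n$. The fact that $\mathcal F$ sits in the smallest algebra $PD_{k,n}P\otimes Q$, hence is a legitimate test set for $\psi_j$ at every stage, is what makes this possible.
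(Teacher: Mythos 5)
Your proposal is correct and takes essentially the same route as the paper: peel off the singular points one at a time via Lemma \ref{FL1} and compose (the paper phrases this as induction on $n$, composing two maps per step with tolerance $\ep/2$ each, testing the outer map on the conjugated image $W_1^*\psi_1({\cal F})W_1$ of the inner one). Your only deviation is the error bookkeeping --- a fixed test set ${\cal F}\subset PD_{k,n}P\otimes Q$ with tolerance $\ep/n$ at every stage, closed up by contractivity of the $\psi_j$ --- which is sound and in fact tidier than the paper's write-up, since it sidesteps the point that the image set $\psi_1({\cal F})$ need not lie in the smaller algebra where \ref{FL1} requires its test set to sit.
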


\begin{proof}
We prove this by induction on $n.$ If $n=1,$ then the lemma
follows from \ref{FL1} immediately.

Suppose that the lemma holds for integers $1\le n\le N.$
Denote by $D_{k,l}=D_k(X, \xi_1, \xi_2,....,\xi_{l}, m_1,
m_2,...,m_{l}),$ $l=1,2,...,.$

Let $\ep>0,$ let ${\cal F}\subset PC(X, M_k)P\otimes Q$ and let
$F\subset Q$ be a unital finite dimensional \SCA.  By the
inductive assumption, there exists a unital monomorphism $\psi_1:
PC(X, M_k)P\otimes Q\to PD_{k, N}P\otimes Q$ such that
\beq\label{FL2-1}
\|\psi_1(f)-f\|<\ep/2
\eneq
for all $f\in {\cal F}.$ Moreover there exists a unital finite
dimensional \SCA\, $F_0\subset Q$ such that
$$
\psi_1(PC(X, M_k)P\otimes F)\subset C_F'.
$$

By Lemma \ref{FL1}, there exists a unital monomorphism $\psi_2:
PD_{k,N}P\otimes Q\to PD_{k, N+1}P\otimes Q$ such that
\beq\label{FL2-2}
\|\psi_2(f)-f\|<\ep/2
\eneq
for all $f\in W_1^*\psi_1({\cal F})W_1$ and a unital finite
dimensional \SCA\, $F_1\in Q$ such that
$$
\psi_2(PD_{k,N}P\otimes F_0)\subset C_{F_0}.
$$

Define $\psi: PC(X, M_k)P\otimes Q\to PD_{k, N+1}P\otimes Q$ by
$\psi= {\rm ad}\, W_1^*\circ \psi_2\circ {\rm ad}\, W_1\circ
\psi_1.$ Then
\beq
\|\psi(f)-f\|&=&\| W_1\psi_2(W_1^*\psi_1(f)W_1)W_1^*-f\|\\
&\le
&\|W_1\psi_2(W_1^*\psi_1(f)W_1)W_1^*-\psi_1(f)\|+\|\psi_1(f)-f\|\\
&\le &\|\psi_2(W_1^*\psi_1(f)W_1)-W_1^*\psi_1(f)W_1\|
+\ep/2<\ep/2+\ep/2=\ep
\eneq
for all $f\in {\cal F}.$

Moreover, by \ref{FL1}, $\psi$ maps $(C(X)\cdot P)\otimes 1_Q$
into $(D_{k,n}P)\otimes 1_Q.$
This completes the induction.

\end{proof}

\begin{lem}\label{FL3}
Let $X$ be a  connected compact metric space with property (A),
let $X_1, X_2,...,X_n\subset X$ be compact subsets,  let $k, m_1,
m_2,...,m_n\ge 1$ be integers with $m_j|k,$ $j=1,2,...,n.$ Let
$P\in {\bar D}_{k, \{X_j\},n}= {\bar D}_k(X, X_1,X_2,...,X_n, m_1,
m_2,...,m_n)$ be a projection.

Then, for any $\ep>0,$ any finite subset ${\cal F}\subset PC(X,
M_k)P\otimes Q$
there exists a unital monomorphism
$$
\psi:PC(X, M_k)P\otimes Q\to P{\bar D}_{k, \{X_j\},n}P\otimes Q
$$
such that
$$
{\rm id}_{P{\bar D}_{k,\{X_j\},n}P\otimes Q}\approx_{\ep}
\psi\circ \imath\,\,\, {\rm on}\,\,\, {\cal F},
$$
where $$\imath: P{\bar D}_{k,\{X_j\},n}P\otimes Q\to PC(X,
M_k)P\otimes Q$$
 is the embedding.

Moreover, $\psi$ maps $(C(X)\cdot P)\otimes 1_Q$ into $(C_{k,
\{X_j\},n}P)\otimes 1_Q.$

Furthermore, if $F\subset Q$ is a unital finite dimensional \SCA,
we may choose $\psi$ so that there exists another unital finite
dimensional \SCA\, $F_1\subset Q$ and a unital \SCA\, $$C_F\subset
PD_{k,n}P\otimes Q$$ such that
$$
\psi(PC(X, M_k))P\otimes F)\subset C_F
$$
and there is a unitary $W\in M_2(P{\bar D}_{k, \{X_j\}, n}P\otimes
Q)$ such that
$$
W^*C_FW=P_1M_2(P{\bar D}_{k, \{X_j\},n} P\otimes F_1)P_1
$$
for some projection $P_1\in M_2(P{\bar D}_{k, \{X_j\}, n}P\otimes
F_1).$
\end{lem}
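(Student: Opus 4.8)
The plan is to follow the proofs of \ref{FL1} and \ref{FL2} closely, replacing each single point $\xi_j$ by the compact set $X_j$ and running the same induction on $n$ as in \ref{FL2}. Since $X_1,\dots ,X_n$ are disjoint compact sets, first fix $\dt_0>0$ with $\dt_0<\min_{i\ne j}\dist(X_i,X_j)$; then every modification performed near one $X_j$ can be kept supported on the collar $\{x:\dist(x,X_j)<\dt_0\}$ and leaves the constraints on the remaining $X_i$ untouched. So it suffices to establish the one--set analogue of \ref{FL1}: assuming the constraints on $X_1,\dots ,X_{n-1}$ are already in force, to produce a unital monomorphism approximately inverting the embedding $P{\bar D}_{k,\{X_j\},n}P\otimes Q\to P{\bar D}_{k,\{X_j\},n-1}P\otimes Q$ which drops only the constraint on $X_n$, and then to compose these steps and conjugate by a matching unitary exactly as in \ref{FL2}.

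For the one--set step I would use the radial coordinate $\rho(x)=\dist(x,X_n)$ in place of $\dist(x,\xi_n)$, with nested collars $\{\rho<\dt_1\}\supset\{\rho<\dt_2\}\supset\cdots$ chosen so small that every $f\in{\cal F}$ varies by less than $\ep/16$ across them. As in \ref{FL1} I would radialize $P$ and the elements of ${\cal F}$ so that they become constant in the collar direction near $X_n$ (the analogues of ${\tilde P}$ and ${\tilde f}$), making the cut--down fibers $\pi_x(\cdots)$ constant for $x$ near $X_n$. Then fix a single unital monomorphism $\mu\colon M_k\otimes Q\to B_n\otimes Q\subset M_k\otimes Q$; since $M_k\otimes Q\cong Q\cong B_n\otimes Q$, Lemma \ref{asy} gives that $\mu$ and ${\rm id}$ are strongly asymptotically unitarily equivalent, hence a path $\{w(t):t\in[0,\dt_6)\}$ with $w(0)=1$ and $\lim_{t\to\dt_6}w(t)^*\mu(a)w(t)=a$, the convergence being uniform on compact sets of elements. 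I then define $\psi(f)$ to equal the honest $\mu(f(\cdot))$ on $X_n$ (so its values there lie in $B_n\otimes Q$), a conjugated interpolation $w(\sigma(\rho(x)))^*\mu(f(x))w(\sigma(\rho(x)))$ on the inner collar for a suitable reparametrization $\sigma$, and $f$ itself outside the collar. Continuity at $X_n$ holds because $w(0)=1$ returns the honest $\mu$, and continuity at the outer edge holds because the limit above is \emph{exactly} the identity; the transition over the radial interval is made clean by Theorem \ref{Appr} (the analogue of the unitary $U_1$ there), and the discrepancy between $P$ and its radialization is absorbed by a near--identity unitary, as with $U_4'$ in \ref{FL1}. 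The resulting $\psi$ is a unital monomorphism with $\psi\circ\imath\approx_\ep{\rm id}$ on ${\cal F}$, and because the deformation only moves elements within fibers and fixes scalar multiples of units it carries $(C(X)\cdot P)\otimes 1_Q$ into $(C_{k,\{X_j\},n}P)\otimes 1_Q$.

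For the ``Furthermore'' statement I would, given the finite--dimensional $F\subset Q$, choose all auxiliary data to respect $F$. Writing $Q=\lim_r(M_{r!},h_r)$, one can pick the isomorphism $M_k\otimes Q\cong B_n\otimes Q$ defining $\mu$ so that $\mu(M_k\otimes F)\subset B_n\otimes F_1$ for a suitable unital finite--dimensional $F_1\subset Q$, and one can take the path $\{w(t)\}$ and the correcting unitaries inside finite--dimensional subalgebras, using Lemma \ref{almostcommut} to keep them almost commuting with $F$ so the small errors are absorbed. Then $C_F:=\psi(PC(X,M_k)P\otimes F)$ lies in a subalgebra modeled on a corner of $P{\bar D}_{k,\{X_j\},n}P\otimes F_1$; the asserted unitary $W\in M_2(P{\bar D}_{k,\{X_j\},n}P\otimes Q)$ and projection $P_1$ arise from the standard fact that two unital finite--dimensional subalgebras of such a building block with matching $K_0$-- and trace data become unitarily equivalent after one stabilization by $M_2$.

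The main obstacle is the radialization near the compact set $X_n$: whereas \ref{FL1} need only make a function constant near one point, here one must make $P$ and each $f\in{\cal F}$ constant in the collar direction over all of $X_n$ simultaneously. For a general compact subset this requires a collar (tubular) neighborhood of $X_n$, that is, the compact--set strengthening of the homeomorphism $\chi$ supplied by property (A); producing and using this collar continuously over the whole of $X_n$, while keeping the fixed data $\mu$ and $\{w(t)\}$ coherent across fibers, is the delicate point. Once the collar is available, the fact that $\mu$ and $\{w(t)\}$ are chosen once and for all makes the passage from a point to a compact set essentially formal.
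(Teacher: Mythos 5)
Your reduction of the compact-set case to a single ``one-set step'' modeled on \ref{FL1} has a genuine gap, and you have in fact named it yourself without repairing it: the radialization of $P$ and of the elements of ${\cal F}$ near $X_n$ requires a homeomorphism collapsing a punctured collar $\{x: 0<\dist(x,X_n)<\dt_1\}$ onto an annular region, i.e.\ a collar (tubular) neighborhood of $X_n$ in $X$. Property (A) as defined in \ref{DA} is a strictly pointwise condition---it supplies the homeomorphism $\chi$ only for balls around a single point $\xi$---and no ``compact-set strengthening'' of it follows from the hypotheses: $X_n$ is an arbitrary compact subset (it could be a Cantor set, or a non-locally-connected continuum inside a square), and such sets admit no collar at all, so the maps $x\mapsto f(\chi^{-1}(x))$ that would make ${\tilde f}$ constant in the collar direction cannot be defined. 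Since the entire one-set step rests on this deformation, fixing $\mu$ and the path $\{w(t)\}$ once and for all does not help; the construction of $\psi$ does not get off the ground for general $X_n$.

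The paper avoids collars entirely by approximating the constraint on $X_n$ from finitely many points, where property (A) does apply: choose a dense sequence $\{\xi_i\}\subset X_n$, observe that
$$
\bigcap_{l=1}^{\infty} PD_{k,\{X_j\}_{j=1}^{n-1},\{\xi_i\},l}P \;=\; P{\bar D}_{k,\{X_j\},n}P,
$$
apply \ref{FL2} at each finite stage to obtain monomorphisms $\psi_l$ approximately inverting the successive inclusions, and then use Elliott's one-sided approximate intertwining argument twice: once to produce $\Phi$ from the inductive limit $A=\lim_l(B_l,\psi_l)$ into $C(X,M_k)\otimes Q$ with image in the intersection, and once to produce $J$ in the opposite direction, concluding $\Phi\circ J={\rm id}$ and hence $A\cong P{\bar D}_{k,\{X_j\},n}P\otimes Q$; the desired $\Psi$ with ${\rm id}\approx_{\ep}\Psi\circ\imath$ on ${\cal F}$ is $\Phi\circ\psi_{1,\infty},$ and induction on $n$ then finishes as in \ref{FL2}. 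This intersection-plus-intertwining device is the key idea missing from your proposal. (Separately, your handling of the ``Furthermore'' clause by appeal to unitary equivalence of finite-dimensional subalgebras with matching $K_0$/trace data is too vague: the paper builds the relevant corner explicitly from $r={\rm rank}(P)\cdot m!-\dim X$ rank-one trivial projections, uses the fact that $\psi$ maps the center into the center together with a small-perturbation unitary from Lemma 2.5.10 of \cite{Lnbk} to push $\psi(E)$ into $PD\otimes M_{N!}$, and assembles $W$ and $P_1$ concretely---but this is secondary to the collar gap.)
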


\begin{proof}

Let $\{\xi_i: i=1,2,...\}$ be a dense sequence of $X_n.$ Put
$$
D_{k,\{X_j\}_{j=1}^{n-1}, \{\xi_i\}, l}={\bar D}_k(X, X_1,
X_2,...,X_{n-1}, \xi_1,\xi_2,...,\xi_l, m_1,
m_2,...,m_{l-1},\overbrace{m_n, m_n,...,m_n}^l).
$$
Note that $PD_{k, \{X_j\}_{j=1}^{n-1},\{\xi_i\}, l+1}P\subset
PD_{k,\{X_j\}_{j=1}^{n-1}, \{\xi_i\},l}P$ and
$$
\bigcap_{l=1}^{\infty} PD_{k,\{X_j\}_{j=1}^{n-1},\{\xi_i\},
l}P=P{\bar D}_{k, \{X_j\}, n}P.
$$

Let $\ep>0$ and let a finite subset  ${\cal F}\subset P{\bar
D}_{k, \{X_j\}_{j=1}^{n-1}, n-1}P\otimes Q$ be  given.

Let $\{{\cal F}_{0,m}\}$ be an increasing sequence of finite
subsets of $P{\bar D}_{k, \{X_j\}_{j=1}^{n-1}, n-1}P\otimes Q$
whose union is dense in $P{\bar D}_{k, \{X_j\}_{j=1}^{n-1},
n-1}P\otimes Q.$ Let $\{{\cal F}_{\infty,m}\}$ be an increasing
sequence of finite subsets of $P{\bar D}_{k,\{X_j\}_{j=1}^n,
n}P\otimes Q$ whose union is dense in $P{\bar D}_{k,
\{X_j\}_{j=1}^{n},n}P\otimes Q.$ Let $\{{\cal F}_{l,m}\}$ be an an
increasing sequence of finite subsets of
$PD_{k,\{X_j\}_{j=1}^{n-1}, \{\xi_i\},l}P\otimes Q$ whose union is
dense in $PD_{k,\{X_j\}_{j=1}^{n-1}, \{\xi_i\}, l}P\otimes Q.$  By replacing 
$F_{j,m}$ by $\cup_{s\ge j} F_{s,m},$ we
may assume that ${\cal F}_{j+1, m}\subset {\cal F}_{j, m},$
$j=0,1,2,...,$ and ${\cal F}_{\infty, m}\subset {\cal F}_{j,m},$
$j=1,2,....$ Let $\{\ep_n\}$ be a sequence of decreasing positive
numbers such that $\sum_{n=1}^{\infty}\ep_n<\ep/2.$ We may also
assume that ${\cal F}={\cal F}_{0,1}.$


By applying \ref{FL2}, one obtains, for each $l,$  a unital
monomorphism $\psi_l: PD_{k,\{X_j\}_{j=1}^{n-1}, \{\xi_i\},
l-1}P\otimes Q\to PD_{k,\{X_j\}_{j=1}^{n-1}, \{\xi_i\},l}P\otimes
Q$ which maps $C_{k,\{X_j\}_{j=1}^{n-1}, \{\xi_i\}, l-1}\cdot P$
into $C_{k,\{X_j\}_{j=1}^{n-1}, \{\xi_i\}, l}P$ such that
\beq\label{FL3-1}
{\rm id}_{PD_{k,\{X_j\}_{j=1}^{n-1}, \{\xi_i\}, l}P\otimes
Q}\approx_{\ep_l/2} \psi_n\circ \imath\,\,\,{\rm on}\,\,\, {\cal
G}_{l-1, l},
\eneq
where ${\cal G}_{0,1}={\cal F}_{0,1}={\cal F}$ and ${\cal
G}_{l-1,l}={\cal F}_{l-1, l}\cup \psi_{l-1}({\cal G}_{l-1, l}),$
$l=2,3,....$

Define $A=\lim_{l\to\infty}(PD_{k,\{X_j\}_{j=1}^{n-1}, \{\xi_i\},
l-1}P\otimes Q, \psi_l).$ We claim that $A\cong P{\bar D}_{k,
\{X_j\},n}P\otimes Q.$

Put $B_l=PD_{k,\{X_j\}_{j=1}^{n-1}, \{\xi_i\}, l-1}P\otimes Q,$
$l=1,2,...$ and put $C=C(X, M_k)\otimes Q.$ Note that $B_1=P{\bar
D}_{k, \{X_j\}_{j=1}^{n-1}, n-1}P\otimes Q.$

 Consider the following diagram:
$$
\begin{array}{ccccccc}
B_1 &\stackrel{\psi_1}{\to} & B_2 &\stackrel{\psi_2}{\to}& B_3
&\stackrel{\psi_2}{\to}\cdots\to A \\
\downarrow_{\imath} && \downarrow_{\imath} & & \downarrow_{\imath} &&\\
 C &\stackrel{{\rm id}}{\to} & C &\stackrel{{\rm id}}{\to}& C
&\stackrel{{\rm id}}{\to}\cdots \to C.
\end{array}
$$
It follows from (\ref{FL3-1}) that the diagram is one-sided
approximately intertwining. Therefore, by an argument of Elliott
(see Theorem 1.10.4 of \cite{Lnbk}, for example) there exists a
\hm\, $\Phi: A\to C=C(X, M_k)\otimes Q$ such that
\beq\label{FL3-2}
\Phi\circ \psi_{l, \infty}\approx_{\sum_{j=l}^{\infty}\ep_l}
\imath\,\,\,{\rm on}\,\,\, {\cal G}_{l-1, l}\andeqn\\\label{FL3-3}
\Phi\circ \psi_{l, \infty}(b)=\lim_{s\to\infty} \imath\circ
\psi_{l,s}(b)
\eneq
for all $b\in B_{l-1}.$ Combining (\ref{FL3-1}) and (\ref{FL3-3}),
we conclude that
\beq\label{FL3-4}
{\rm dist}(\Phi\circ \psi_{l, \infty}(b), C_l)=0
\eneq
for all $b\in B_l.$  Since $C_m\subset C_l,$ if $m\ge l,$ we
conclude that
\beq\label{FL3-5}
{\rm dist}(\Phi\circ \psi_{m, \infty}(b), C_l)=0
\eneq
for all $b\in C_m,$ $m=l, l+1,....$ It follows that
\beq\label{FL3-6}
{\rm dist}(\Phi(A), C_l)=0\tforal l.
\eneq
Therefore
\beq\label{FL3-7}
\Phi(A)\subset \cap_{l=1}^{\infty}C_l=PD_{k, \{X_j\},n}P\otimes Q.
\eneq

Put $D=PD_{k, \{X_j\},n}P\otimes Q.$ Then $\imath: D\to B_l$ for
each $l.$ Thus, as above,  we obtain the following one-sided
approximately intertwining:
$$
\begin{array}{ccccccc}
D &\stackrel{{\rm id}}{\to} & D &\stackrel{{\rm id}}{\to}& D
&\stackrel{{\rm id}}{\to}\cdots \to D \\
\downarrow_{\imath} && \downarrow_{\imath} & & \downarrow_{\imath} &&\\
 B_1 &\stackrel{\psi_1}{\to} & B_2 &\stackrel{\psi_2}{\to}& B_3
&\stackrel{\psi_3}{\to}\cdots \to A.
\end{array}
$$
From this we obtain a \hm\, $J: D\to A$ such that
\beq\label{FL3-8}
J(d)=\lim_{l\to\infty}\psi_{l,\infty}\circ \imath(d)
\eneq
for all $d\in D.$ We then compute (by \ref{FL3-3}) and
(\ref{FL3-1}))  that
\beq\label{FL3-9}
\Phi\circ J={\rm id}_D.
\eneq
This, in particular, implies that $\Phi$ maps $A$ onto $D.$ We
then conclude that $\Phi$  gives an isomorphism from $A$ to $D$
and $\Phi^{-1}=J.$

Put $\Psi=\Phi\circ \psi_{1, \infty}: PD_{k, \{X_j\}_{j=1}^{n-1},
n-1}P\otimes Q\to D.$ Then, by (\ref{FL3-2}), we have
\beq\label{FL3-10}
{\rm id}_{D}\approx_{\ep/2} \Psi\circ \imath\,\,\,{\rm on}\,\,\,
{\cal F}={\cal F}_{0,1}.
\eneq
Moreover, $\Psi$ is a unital monomorphism. Furthermore,  we also note
that $\Psi$ maps $C_{k, \{X_j\}_{j=1}^{n-1},n-1}P\otimes 1_Q$ into
$C_{k, \{X_j\}, n}P\otimes 1_Q.$

We now use the induction.  The same argument used in the proof of
\ref{FL2} proves that there exists a unital monomorphism $\psi:
PC(X, M_k)P\otimes Q\to  PD_{k, \{X_j\}, n}P\otimes Q$ such that
\beq\label{FL3-11}
{\rm id}_D\approx_{\ep} \psi\circ \imath\,\,\,{\rm on}\,\,\, {\cal
F}
\eneq
and $\psi$ maps $(C(X, M_k)\cdot P)\otimes 1_Q)$ into $(C_{k,
\{X_j\},n}\cdot P)\otimes 1_Q).$ This proves the first two parts
of the statement.

To prove the last part of the statement, let $F_0\subset Q$ be a
unital finite dimensional \SCA. We may assume, without loss of
generality,  that $F_0\subset M_{m!}.$ We may also assume that
$m>2({\rm dim}(X)+1)$ and $2({\rm rank}(P)\times m!-{\rm
dim}(X))\ge {\rm rank}(P)\times m!.$ Put $r={\rm rank}(P)\times
m!-{\rm dim}(X).$ There  is a system of matrix units
$\{e_{i,j}\}_{i,j=1}^r\subset PC(X, M_k)P\otimes M_{m!}$ such that
$e_{i,i}$ is a rank one trivial projections and
$$
P\otimes 1_{M_{m!}}=q_0\oplus \sum_{i=1}^re_{i,i}.
$$
where $q_0$ is a projection of rank ${\rm dim}(X).$  Let $E$ be
the \SCA\, generated by $(C(X)\cdot P\otimes 1_{M_{m!}})\cdot
e_{1,1}$ and $\{e_{i,j}: 1\le i,j\le r\}.$ There is a projection
$q_{00}\in E$ with rank ${\rm dim}(X)$ and a unitary $w_0\in PC(X,
M_k)P\otimes M_{m!}$ such that
$$
w_0^*q_{00}w_0=q_0.
$$
It follows that there exists a projection $P_1\in  M_2(E)$ and a
unitary $W_0\in M_2(PC(X, M_k)P\otimes M_{m!})$ such that
\beq\label{FL3-12}
W_0^*P_1M_2(E)P_1W_0=PC(X, M_k)P\otimes M_{m!},
\eneq

For any $\dt>0,$  there is an integer $N\ge m$ such that
$$
{\rm dist}(\psi(e_{i,j}), D\otimes M_{N!})<\dt
$$
for some integer $N\ge m,$ $1\le i,j\le r.$ It follows from Lemma
2.5.10 of \cite{Lnbk} that, by choosing
small $\dt,$  there exists a unitary $W_1\in D\otimes M_{N!}$ such
that
$$
\|W_1-1\|<\ep/2\andeqn W_1^*\psi(e_{i,j})W_1\subset D\otimes
M_{N!},\,\,\,i,j=1,2,...,r.
$$

Since $\psi(C(X)\cdot P\otimes 1_Q)$ is in the center of $D\otimes
Q,$
\beq
\psi(fW_1^*e_{i,j}W_1)&=&\psi(f)\psi(W_1)^*\psi(e_{i,j})\psi(W_1)\\
&=&\psi(W_1)^*\psi(f)\psi(e_{i,j})\psi(W_1)=\psi(W_1)^*\psi(fe_{i,j})\psi(W_1)
\eneq
for all $f\in C(X)\cdot P\otimes 1_Q$ and $1\le i,j\le r.$  It
follows that
$$
\psi(W_1)^*\psi(E)\psi(W_1)\subset  PD_{k, \{X_j\},n}P\otimes
M_{N!}.
$$
Put $P_2=(\psi\otimes {\rm id}_{M_2})(P_1).$ Then $P_2\in
M_2(PD_{k, \{X_j\}, n}P\otimes M_{N!}).$
Define $W_3=\begin{pmatrix}\psi(W_1) & 0\\
                                                       0 & \psi(W_1)\end{pmatrix}.$
By (\ref{FL3-12}),
\beq\label{FL3-13}
\psi(PC(X, M_k)P\otimes M_{m!})&=&(\psi\otimes {\rm id}_{M_2})
(PC(X, M_k)P\otimes M_{m!})\\
 &&\hspace{-1.4in}=(\psi\otimes{\rm id}_{M_2})(W_0^*P_1M_2(E)P_1W_0)\\
 &&\hspace{-1.4in}=
(\psi\otimes {\rm id}_{M_2})(W_0)^*(P_2M_2(\psi(E))P_2)
(\psi\otimes{\rm id}_{M_2})(W_0)\\
&&\hspace{-1.4in}=
(\psi\otimes {\rm
id}_{M_2})(W_0)^*(W_3(W_3^*P_2M_2(\psi(E))P_2)W_3)W_3^*)
(\psi\otimes{\rm id}_{M_2})(W_0)\\
 &&\hspace{-1.4in}\subset
\psi\otimes {\rm id}_{M_2}(W_0)^*(W_3(P_2M_2(PD_{k,
\{X_j\},n}P\otimes M_{N!})P_2)W_3^* )(\psi\otimes{\rm
id}_{M_2})(W_0)
\eneq
Put $W=W_3^*(\psi\otimes {\rm id}_{M_2}(W_0)).$


\end{proof}

\section{The main results}

\begin{thm}\label{MT3}
Let $A=\lim_{n\to\infty}(A_n, \phi_n)$ be a unital simple
inductive limit of generalized dimension drop algebras. Then
$TR(A\otimes Q)\le 1.$
\end{thm}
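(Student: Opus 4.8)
The plan is to dispose of the dimension-drop structure by Lemma~\ref{FL3} after tensoring with $Q$, thereby realising $A\otimes Q$ as a unital simple AH-algebra, and then to verify $TR\le 1$ for the resulting homogeneous model by a local, spectrum-spreading argument. Write $A_n=\bigoplus_j P_{n,j}{\bar D}_{X_{n,j},r(n,j),\{X_{n,j,i}\},d(n,j)}P_{n,j}$, let $H_n=\bigoplus_j P_{n,j}C(X_{n,j},M_{r(n,j)})P_{n,j}$ be the associated homogeneous \CA, and let $\imath_n\colon A_n\otimes Q\to H_n\otimes Q$ be the embedding. By Lemma~\ref{FL3}, applied summand by summand with finer and finer finite subsets and tolerances $\ep_n$ satisfying $\sum_n\ep_n<\infty$, there are unital monomorphisms $\psi_n\colon H_n\otimes Q\to A_n\otimes Q$ with $\psi_n\circ\imath_n\approx_{\ep_n}\mathrm{id}$ on the chosen subsets.

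First I would set $\sigma_n=\imath_{n+1}\circ(\phi_n\otimes\mathrm{id})\circ\psi_n\colon H_n\otimes Q\to H_{n+1}\otimes Q$ and consider the two towers $\{A_n\otimes Q,\ \phi_n\otimes\mathrm{id}\}$ and $\{H_n\otimes Q,\ \sigma_n\}$, connected by the downward maps $\imath_n$ and the upward maps $(\phi_n\otimes\mathrm{id})\circ\psi_n$. Because $\psi_n\circ\imath_n\approx_{\ep_n}\mathrm{id}$, one has
\[
\bigl((\phi_n\otimes\mathrm{id})\circ\psi_n\bigr)\circ\imath_n\approx_{\ep_n}\phi_n\otimes\mathrm{id}
\]
on the relevant subsets, while $\imath_{n+1}\circ\bigl((\phi_n\otimes\mathrm{id})\circ\psi_n\bigr)=\sigma_n$ by definition, so the two towers are approximately intertwined. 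By Elliott's intertwining argument (Theorem~1.10.4 of \cite{Lnbk}) this gives
\[
A\otimes Q\cong\lim_{n\to\infty}\bigl(H_n\otimes Q,\sigma_n\bigr)=:C.
\]
Since each $H_n\otimes Q$ is itself an inductive limit of homogeneous \CA s over the finite-dimensional spaces $X_{n,j}$ (using $M_{r}\otimes Q=\lim_k M_{r\cdot k!}$), $C$ is a unital simple AH-algebra, and $C\cong C\otimes Q$ because $Q\otimes Q\cong Q$.

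It remains to prove $TR(C)\le 1$. Here I would argue locally. Given a finite subset ${\cal F}\subset C$, $\ep>0$ and a nonzero positive $a\in C$, absorb ${\cal F}$ up to $\ep$ into some $\sigma_{n,\infty}(H_n\otimes Q)$ and then, using simplicity, pass to a deep stage $m$ so that $\sigma_{n,m}$ spreads the spectrum of $H_n$: each fixed element of ${\cal F}$ varies by less than $\ep$ over the fibres of a fine partition of each $X_{m,j}$, and the trace of the image of any fixed projection becomes almost evenly distributed. Covering each $X_{m,j}$ by finitely many contractible sets and using that $Q$ contains matrix algebras of arbitrarily large size, I would build a unital \SCA\ $C'\in{\cal I}$ --- matrices over a one-dimensional complex modelled on a $1$-skeleton of the nerve of the cover --- sitting under a projection $p\in C$, so that the members of ${\cal F}$ almost commute with $p$, are approximated on $pCp$ by elements of $C'$, and the complementary projection $1-p$ carries arbitrarily small trace. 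Finally, $Q$-stability gives strict comparison in $C$, so the small-trace projection $1-p$ is Murray-von Neumann equivalent to a projection in $\overline{aCa}$; this verifies the three defining conditions of $TR(C)\le 1$.

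The main obstacle is precisely this last reduction from a (possibly high-dimensional) homogeneous spectrum to the one-dimensional algebras of ${\cal I}$. Lemma~\ref{FL3} removes the dimension-drop structure cleanly, but the spaces $X_{n,j}$ need not have bounded dimension, so a finite-dimensional approximation by point-evaluations alone will not suffice. The crux is to play simplicity against the $Q$-factor: spectrum spreading forces the high-dimensional cells of a fine cover to occupy arbitrarily small trace, while the matrix amplifications supplied by $Q$ --- together with the matrix-unit decomposition furnished by the last part of Lemma~\ref{FL3} --- allow the trace-large part to be presented as matrices over a one-dimensional skeleton. Making the estimate on $\tau(1-p)$ uniform as the dimension grows, while simultaneously keeping the almost-commutation in condition~(1) under control, is the delicate point, and it is where the finite-dimensionality and property~(A) of each $X_{n,j}$, the divisibility of $Q$, and strict comparison must all be combined.
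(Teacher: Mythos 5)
Your first half is, up to notation, exactly the paper's argument: Lemma \ref{FL3} applied along the inductive system, followed by a one-sided Elliott approximate intertwining between the towers $\{A_n\otimes Q,\ \phi_n\otimes\mathrm{id}\}$ and $\{H_n\otimes Q,\ \sigma_n\}$, yielding $A\otimes Q\cong C:=\lim_{n}(H_n\otimes Q,\sigma_n)$; your $H_n$, $\sigma_n$ are the paper's $B_n$, $\psi_n$, and your intertwining diagram is the one displayed in the paper's proof. The divergence, and the genuine gap, is in the second half. Knowing that $C$ is a unital simple AH-algebra is not enough: simple AH-algebras with uncontrolled dimension growth can fail to have tracial rank at most one (Villadsen-type examples, cf.\ \cite{V2}, \cite{T1}), so your remaining step is not a verification but the entire difficulty. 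What you sketch --- nerve covers, spectrum spreading, strict comparison to absorb the trace-small complement, with estimates kept uniform as $\dim X_{m,j}\to\infty$ --- is essentially the content of Gong's reduction theorem, a deep and long result that cannot be established at this level of detail; you candidly flag it as ``the delicate point'' but do not resolve it, so the proposal does not close.

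The paper takes a much shorter route precisely here, and it is worth seeing how. It uses the \emph{Furthermore} clause of Lemma \ref{FL3} (which your sketch mentions only in the final paragraph, without deploying it) to arrange, inductively, nested unital subalgebras $C_n\subset B_n\otimes Q$ of the form $C_n=U_n^*P^{(n-1)}M_{2^{n-1}}(B_n\otimes E_{n,n})P^{(n-1)}U_n$ with $E_{n,n}\subset Q$ a full matrix algebra, together with $\psi_n(C_n)\subset C_{n+1}$ and ${\cal S}_{n,n}\subset C_n$, so that $\bigcup_n\psi_{n,\infty}(C_n)$ is dense in $C$ and hence $C=\lim_n(C_n,\psi_n|_{C_n})$. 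Each $C_n$ is a corner of matrices over a homogeneous algebra over the spaces $X_{n,j}$ with \emph{finite} matrix fibres of size governed by $\mathrm{rank}(E_{n,n})$, and the induction imposes $d_n^3/\mathrm{rank}(E_{n,n})<1/2^n$; thus $C$ is exhibited as a unital simple AH-algebra with \emph{very slow dimension growth}, and $TR(C)\le 1$ follows by quoting Gong's theorem (Theorem 2.5 of \cite{LnJFA}). To repair your proof, replace the direct local argument by this bookkeeping: use the matrix-unit decomposition furnished by \ref{FL3} to pin each finite stage inside a finite-matrix homogeneous block, track the dimension-to-rank ratio along the induction, and cite Gong --- otherwise you are implicitly re-proving his reduction theorem.
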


\begin{proof}
Write $Q=\overline{\cup_{n=1}^{\infty}F_n},$ where each $F_n$ is a
unital simple finite dimensional \SCA s and $F_n\subset F_{n+1},$
$n=1,2,....$
Let
$\Phi_n=\phi_n\otimes {\rm id}_Q: A_n\otimes Q\to A_{n+1}\otimes
Q,$ $n=1,2,....$  Write
$$
A_n=\bigoplus_{j=1}^{m(n)}P_{n, j}D(X_{n,j},r(n,j))P_{n,j},
$$
where $X_{n,j}$ is a connected compact metric space with finite
dimension and with property (A), $r(n,j)\ge 1$ is an integer,
$$
D(X_{n,j}, r(n,j))= {\bar D}_{r(n,j)}(X_{n,j}, r(n,j), Y_{n,j,1},
,...,Y_{n,j,d(n,j)}, m_{n,j,1},...,m_{n,j,d(n,j)}),
$$
$Y_{n,j,1}, Y_{n,j,2},...,Y_{n,j,d(n,j)}$ are disjoint compact
subsets of $X_{n,j}$ and $P_{n,j}\in D(X_{n,j}, r(n,j))$ is a
projection, $n=1,2,....$ Define
$$
B_n=\bigoplus_{j=1}^{m(n)} P_{n,j}C(X_{n,j}, M_{r(n,j)})P_{n,j},
$$
$n=1,2,....$ Denote by $\imath_n: A_n\to B_n$ the embedding.
Denote
$$
d_n=\max\{{\rm dim }(X_{n,j}: 1\le j\le m(n)\}.
$$
Let
$\{{\cal F}_n\}$ be an increasing sequence of finite subsets of
$A\otimes Q$ for which ${\cal F}_n\subset
\Phi_{n,\infty}(A_n\otimes Q),$ $n=1,2,...,$ and
$\cup_{n=1}^{\infty}{\cal F}_n$ is dense in $A\otimes Q.$ Let
${\cal G}_n\subset A_n\otimes Q$ be a finite subset such that
$\Phi_{n,\infty}({\cal G}_n)={\cal F}_n,$ $n=1,2,....$ Without
loss of generality, we may assume that ${\cal G}_n\subset
A_n\otimes F_n,$ $n=1,2,....$
Let $\{\ep_n\}$ be a sequence of decreasing positive numbers such
that $\sum_{n=1}^{\infty} \ep_n<1/2.$

It follows from \ref{FL3} that there exists  a unital monomorphism
$\Psi_1:B_1\otimes Q\to A_1\otimes Q$ such that
\beq\label{MT-1}
\Psi_1\circ \imath_1\approx_{\ep_1} {\rm id}_{A_1\otimes
Q}\,\,\,{\rm on}\,\,\, {\cal G}_1.
\eneq
 Moreover, by \ref{FL3}, there exists a unital finite dimensional
\SCA\, $E_{1,1}\subset Q,$ a unital \SCA\, $C_{F_1}\subset
A_1\otimes Q,$ a projection $P_1\in M_2(A_1\otimes E_{1,1})$ and
a unitary $W_1\in M_2(A_1\otimes Q)$ such that
$$
\Psi_1(B_1\otimes F_1)\subset C_{F_1}\andeqn W_1^*C_{F_1}W_1=
P_1M_2(A_1\otimes E_{1,1})P_1.
$$
Put $U_1=1_{B_1}$ and $C_1=B_1\otimes F_1.$
 Define $\theta_1=\Phi_1\circ \Psi_1: B_1\otimes Q\to
A_2\otimes Q.$ Suppose that $\{{\cal S}_{1,k}\}$ is an increasing
sequence of finite subsets of $B_1\otimes Q$ such that
$\cup_{k=1}^{\infty} {\cal S}_{1,k}$ is dense in $B_1\otimes Q.$
We may assume that
$$
{\cal S}_{1,k}\subset \bigoplus_{j=1}^{m(1)} P_{1,j}C(X_{1,j},
M_{r(1,j)})P_{1,j} \otimes F_k.
$$

Define ${\cal S}_1={\cal S}_{1,1}\cup \imath_1({\cal G}_1)$ and
${\cal G}_2'={\cal G}_2\cup \theta_1({\cal S}_1).$ To simplify the
notation, we may assume, without loss of generality, that there
exists a finite subset ${\cal G}_2''\subset A_2\otimes F_2$ such
that
$$
{\cal G}_2'\subset_{\ep_2/2} {\cal G}_2''.
$$
By applying \ref{FL3},
there exists a unital monomorphism $\Psi_2:B_2\otimes Q\to
A_2\otimes Q$ such that
\beq\label{MT-2}
\Psi_2\circ \imath_2\approx_{\ep_2} {\rm id}_{B_2}\,\,\,{\rm
on}\,\,\, {\cal G}_2'\cup{\cal G}_2''.
\eneq
Moreover, by \ref{FL3}, there exists a unital finite dimensional
\SCA\, $E_{2,1}\subset Q,$ a unital \SCA\, $C_{F_2}\subset
A_2\otimes Q,$ a projection $P_2\in M_2(A_2\otimes E_{2,1})$  and
a unitary $W_2\in M_2(P_2(A_2\otimes Q)P_2)$ such that
\beq
\Psi_2(B_2\otimes E_{1,1})\subset C_{F_2}\andeqn\\
W_2^*C_{F_2}W_2= P_2M_2(A_2\otimes E_{2,1})P_2.
\eneq

Define $\psi_1=\imath_2\circ\Phi_1\circ \Psi_1: B_1\otimes Q\to
B_2\otimes Q.$ Define $\theta_2=\Phi_2\circ \Psi_2.$  We have
\beq\label{MT-3} \imath_2\circ \theta_1=\psi_1. \eneq Also, by
(\ref{MT-1}) and (\ref{MT-2}), we have
\beq\label{MT-4}
\theta_1\circ \imath_1&\approx_{\ep_1}& \Phi_1\,\,\,{\rm
on}\,\,\,{\cal G}_1\andeqn\\
\theta_2\circ \imath_2&\approx_{\ep_2} & \Phi_2\,\,\,{\rm
on}\,\,\, {\cal G}_2'\cup {\cal G}_2''.
\eneq

 Let $\{{\cal S}_{2,k}\}$ be an increasing sequence of finite
subsets of $B_2\otimes Q$ such that its union is dense in
$B_2\otimes Q.$ We may assume that
\beq\label{MT-4+1}
\psi_1(S_{1,2}), \imath_2({\cal G}_2'\cup{\cal
G}_2'')\subset_{\ep_3/2} {\cal S}_{2,2}.
\eneq
We may also assume that $(\Phi_1\otimes {\rm
id}_{M_2})(W_1^*){\cal S}_{2,k}(\Phi_1\otimes {\rm
id}_{M_2})(W_1)\subset (\Phi_1\otimes {\rm
id}_{M_2})(P_1)M_2(B_2\otimes E_{2,k})(\Phi_1\otimes {\rm
id}_{M_2})(P_1),$ where $\{E_{2,k}\}$ is an increasing sequence of
finite dimensional \SCA s of $Q$ whose union is dense in $Q.$ We
assume that $E_{1,1}\subset E_{2,2}$ and $E_{2,2}$ is simple and
$$
{d_2^3\over{{\rm rank}(E_{2,2})}}<1/2^2.
$$
Define $C_2=W_1(\Phi_1\otimes {\rm id}_{M_2})(P_1)M_2(B_2\otimes
E_{2,2})(\Phi_1\otimes {\rm id}_{M_2})(P_1)W_1^*\subset B_2\otimes
Q.$
Define $U_2=W_1^*.$ Then $C_2=U_2^*(\Phi_1\otimes {\rm
id}_{M_2})(P_1)M_2(B_2\otimes E_{2,2})(\Phi_1\otimes {\rm
id}_{M_2})(P_1)U_2.$ Moreover, $\psi_1(C_1)\subset C_2.$
Define ${\cal S}_2={\cal S}_{2,2}\cup \imath_2({\cal G}_2')\subset
C_2$ and ${\cal G}_3'={\cal G}_3\cup \theta_2({\cal S}_2).$ We may
assume that there is a finite subset ${\cal G}_3''\subset
A_3\otimes F_3$ such that ${\cal G}_3'\subset_{\ep_3/2} {\cal
G}_3''.$

We will use induction. Suppose that  unital monomorphism $\Psi_n:
B_n\otimes Q\to A_n\otimes Q,$ $\psi_n=\imath_{n+1}\circ
\Phi_n\circ \Psi_n: B_n\otimes Q\to B_{n+1}\otimes Q,$
$\theta_n=\Psi_n\circ \Phi_n: B_n\otimes Q\to A_{n+1}\otimes Q$
($n=1,2,...,N$), a unital finite dimensional \SCA\,
$E_{n,1}\subset Q$ ($n=1,2,...,N-1$), a projection $P_n\in
M_2(A_n\otimes E_{n,1})$ and a unitary $W_n\in M_2(A_n\otimes
E_{n,1})$ ($n=1,2,...,N$) have been constructed such that

\beq\label{MT-5}
\theta_n\circ \imath_n&\approx_{\ep_n}& \Phi_n\,\,\,{\rm on}\,\,\,
{\cal G}_n'\cup {\cal G}_n'',\\\label{MT-5+}
 \psi_n&=&\imath_{n+1}\circ \theta_n,
 \eneq
\beq\label{MT-5++}
W_n^* \Psi_n(B_n\otimes E_{n-1,n-1})W_{n}\subset P_nM_2(A_n\otimes
E_{n,1})P_n,
\eneq
where $\{E_{n,k}\}$ is an increasing sequence of unital finite
dimensional \SCA s of $Q$ whose union is dense in $Q,$ with
$E_{n-1, n-1}\subset E_{n,1},$ $E_{n,n}$ is a full matrix algebra
and
\beq\label{MT-5+++}
{d_n^3\over{{\rm rank}(E_{n,n})}}<1/2^n,
\eneq
where $\{{\cal S}_{n,k}\}$ is an increasing sequence of finite
 subsets of $B_n\otimes Q$ whose union is dense in $B_n\otimes Q,$
\beq\label{MT-6}
\cup_{j=1}^{n-1}\psi_{j,n-1}({\cal S}_{j,n})\cup \imath_n({\cal
G}_n'\cup {\cal G}_n'')\subset_{\ep_{n+1}/2} {\cal S}_{n,n},
\eneq
\beq\label{MT-7}
U_n{\cal S}_{n,k}U_n^*\subset P^{(n-1)}M_{2^{n-1}}(B_n \otimes
E_{n,k})P^{(n-1)},
\eneq
where $U_n=(\Phi_{n-1}\otimes {\rm
id}_{M_{2^{n-1}}})(W_{n-1}^*\otimes
1_{M_2})\psi_{n-1}^{(n-1)}(U_{n-1}),$ $n=3,4,...,N+1,$ where
$P^{(n-1)}\in M_{2^{n-1}}(B_n\otimes E_{n,n})$ with
$$
U_n^*P^{(n-1)}U_n=1_{B_n\otimes Q}=1_{B_n\otimes E_{n,n}},
$$
where ${\cal
 G}_{n+1}'={\cal G}_{n+1}\cup \theta_n({\cal S}_n),$
 ${\cal G}_{n+1}'\subset_{\ep_{n+1}/2} {\cal G}_{n+1}'',$
 ${\cal G}_{n+1}''\subset A_{n+1}\otimes F_{n+1}$ is a finite subset,
 $n=1,2,...,N+1$
 (with ${\cal G}_1'={\cal G}_1$), where
 $\psi_n^{(i)}=\psi_n\otimes {\rm id}_{M_{2^i}},$ $i=1,2,....$

Furthermore, $C_n=U_n^* P^{(n-1)}M_{2^{n-1}}(B_n\otimes
E_{n,n})P^{(n-1)}U_n$
and
$\psi_{n-1}(C_{n-1})\subset C_n,$ $n=2,3,..., N+1.$
By applying \ref{FL3}, we obtain
a unital monomorphism $\Psi_{N+1}:B_{N+1}\otimes Q\to
A_{N+1}\otimes Q$ such that
\beq\label{MT-8} \Psi_{N+1}\circ
\imath_{N+1}\approx_{\ep_{N+1}} {\rm id}_{B_{N+1}}\,\,\,{\rm
on}\,\,\, {\cal G}_{N+1}'\cup {\cal G}_{N+1}''.
\eneq
Moreover, there exists a unital finite dimensional
\SCA\,$E_{N+1,1}\subset Q,$ a projection $P_{N+1}\in
M_2(A_{N+1}\otimes E_{N+1,1})$ and a unitary $W_{N+1}\in
M_2(A_{N+1}\otimes E_{N+1,1})$ such that
\beq\label{MT-9}
W_{N+1}^*(\Psi_{N+1}( B_{N+1}\otimes E_{N+1,N+1}))W_{N+1}\subset
P_{N+1}M_2(A_{N+1}\otimes E_{N+1,1}) P_{N+1}.
\eneq
Define
$\theta_{N+1}=\Phi_{N+1}\circ \Psi_{N+1}: B_{N+1}\otimes Q\to
A_{N+2}\otimes Q$ and $\psi_{N+1}=\imath_{N+2}\circ \theta_{N+1}:
B_{N+1}\otimes Q\to B_{N+2}\otimes Q.$  Then, by(\ref{MT-5})
\beq\label{MT-10}
\theta_{N+1}\circ \imath_{N+1}
&\approx_{\ep_{N+1}}&\Phi_{N+1}\,\,\,{\rm on}\,\,\, {\cal G}_{N+1}'.
\eneq

Let $\{E_{N+1, k}\}$ be an increasing sequence of unital finite
dimensional \SCA s of $Q$ whose union is dense in $Q$ and $E_{N,
N}\subset E_{N+1,1},$ $E_{N+1, N+1}$ is a full matrix algebra with
\beq\label{MT-10+}
{d_{N+1}\over{{\rm rank}(E_{N+1,N+1})}}<1/2^{N+1}.
\eneq

 Let $\{{\cal S}_{N+1, k}\}$ be an increasing sequence of finite
subsets of $B_{N+1}\otimes Q$ whose union is dense in
$B_{N+1}\otimes Q.$ We may assume that
$$
\bigcup_{j=1}^N\psi_{j, N}({\cal S}_{j,n})\cup \imath_n({\cal
G}_n'\cup {\cal G}_n'')\subset_{\ep_{N+1}/2} S_{N+1, N+1}.
$$
Since $U_{N+1}^*P^{(N)}U_{N+1}=1_{B_{N+1}\otimes 1_Q},$ we may
further assume that
$$
U_{N+1}{\cal S}_{N+1,k}U_{N+1}^*\subset (\Phi_{N+1}\otimes {\rm
id}_{M_2})(P^{(N)})M_2(B_{N+1}\otimes E_{N+1,
k})(\Phi_{N+1}\otimes {\rm id}_{M_2})(P^{(N)}).
$$
Define   $U_{N+2}=(\Phi_{N+1}\otimes {\rm
id}_{M_{2^N}})(W_{N+1}^*\otimes
1_{M_{2^N}})\psi_{N+1}^{(N+1)}(U_{N+1}).$
Put ${\bar U}_{N+i}=(\psi_{N+i}\otimes {\rm
id}_{M_2})(U_{N+i}),\,i=1,2,$ and $\Phi_{m}^{(i)}=\Phi_{m}\otimes
{\rm id}_{M_{2^i}},m,i=1,2,...,$ and ${\bar
P}^{(N)}=\psi_{N+1}^{(N)}(P^{(N)}).$  Then (using (\ref{MT-5++}))
\beq\nonumber
\psi_{N+1}(C_{N+1})\\\nonumber
 &&\hspace{-1.1in}=\psi_{N+1}^{(N)}(U_{N+1}^*
P^{(N)}M_{2^N}(B_{N+1}\otimes E_{N+1,N+1})
 P^{(N)}U_{N+1})\\\nonumber
&&\hspace{-1.2in}={\bar U}_{N+1}^*{\bar P}^{(N)}
\psi_{N+1}^{(N)} (M_{2^N}(B_{N+1}\otimes E_{N+1, N+1})) {\bar
P}^{(N)}{\bar U}_{N+1}\\\nonumber
&&\hspace{-1.2in}\subset
{\bar U}_{N+1}^*{\bar P}^{(N)}
M_{2^N}(\Phi_{N+1}^{(2)}(W_{N+1}P_{N+1})M_2(B_{N+2}\otimes E_{N+2,
N+2})\Phi_{N+1}^{(2)}(P_{N+1}W_{N+1}^*)){\bar P}^{(N)}{\bar
U}_{N+1}
\eneq
Note that $ \Phi_{N+1}^{(2)}(W_{N+1}P_{N+1}W_{N+1}^*)$ has the
form
$$
\begin{pmatrix} 1_{B_{N+2}}\otimes 1_{E_{N+2, N+2}} & 0\\
                    0             & 0\end{pmatrix}.
                    $$

Therefore
\beq\label{ADD1}
{\bar P}^{(N)}\Phi_{N+1}^{(2)}(W_{N+1}P_{N+1}W_{N+1}^*)\otimes
1_{M_{2^N}}= {\bar P}^{(N)}.
\eneq
Put
\beq\label{ADD1+}                    {\bar W}_{N+1}=\Phi_{N+1}^{(N+1)}(W_{N+1}\otimes
                    1_{M_{2^{N}}})\andeqn
                    P^{(N+1)}={\bar W}_{N+1}^*{\bar P}^{(N)}{\bar W}_{N+1}.
                    \eneq
One has
\beq
\hspace{-0.3in}U_{N+2}^*P^{(N+1)}U_{N+2}
&=&\psi_{N+1}^{(N+1)}(U_{N+1})^*{\bar W}_{N+1} P^{(N+1)}
{\bar W}_{N+1}^*\psi_{N+1}^{(N+1)}(U_{N+1})\\
&=&\psi_{N+1}^{(N+1)}(U_{N+1})^*{\bar
P}^{(N)}\psi_{N+1}^{(N+1)}(U_{N+1})^*\\
&=&
\psi_{N+1}^{(N+1)}(U_{N+1}^*P^{(N)}U_{N+1})=\psi_{N+1}^{(N+1)}(1_{B_{N+1}\otimes
Q})=1_{B_{N+2}\otimes Q}.
\eneq
 It follows that (see (\ref{ADD1+}))
\beq
\psi_{N+1}(C_{N+1})\\
 &&\hspace{-0.5in}\subset  {\bar U}_{N+1}^*{\bar
 P}^{(N)}{\bar W}_{N+1}
M_{2^{N+1}}(B_{N+2}\otimes E_{N+2, N+2}){\bar W}_{N+1}^*{\bar
P}^{(N)}{\bar U}_{N+1}\\
&&\hspace{-0.5in}={\bar U}_{N+1}^*{\bar W}_{N+1}P^{(N+1)}
M_{2^{N+1}}(B_{N+2}\otimes E_{N+2, N+2})P^{(N+1)}{\bar
W}_{N+1}^*{\bar U}_{N+1}\\
&&\hspace{-0.5in}=U_{N+2}^* P^{(N+1)}M_{2^{N+2}}(B_{N+2}\otimes
E_{N+2, N+2})P^{(N+1)}U_{N+2}.
\eneq
Define
$$
C_{N+1}=U_{N+2}^* P^{(N+1)}M_{2^{N+2}}(B_{N+2}\otimes E_{N+2,
N+2})P^{(N+1)}U_{N+2}.
$$
Therefore
$$
\psi_{N+1}(C_{N+1})\subset C_{N+2}.
$$

 Put $E_n=A_n\otimes Q$ and $B_n'=B_n\otimes Q$ and define
$C=\lim_{n\to\infty}(B_n', \psi_n).$ It follows from (\ref{MT-5})
and (\ref{MT-5+}) that  the following diagram
$$
\begin{array}{ccccccc}
E_1 \hspace{0.2in}\stackrel{\Phi_1}{\to} & E_2
\,\,\,\,\,\stackrel{\Phi_2}{\to} & E_3
& \stackrel{\Phi_3}{\to}\cdots\\
\hspace{0.4in}\searrow{\imath_1}&
\hspace{-0.3in}\nearrow_{\theta_1}
&\hspace{-0.4in}\searrow{\imath_2}\nearrow_{\theta_2} &\hspace{-0.5in}\searrow{\imath_3}\\
&\hspace{-0.5in}B_1' \,\,\,\,\,\,\,\stackrel{\psi_1}{\to} &
\hspace{-0.1in}B_2' \,\,\,\,\,\stackrel{\psi_2}{\to} & B_3'
\cdots\\
\end{array}
$$
is approximately intertwining in the sense of Elliott. It follows
that $A\otimes Q\cong C.$ In particular, $C$ is a unital simple
\CA.

By (\ref{MT-7}),
$$
{\cal S}_{n,n}\subset C_n,\,\,\,n=1,2,....
$$
It follows from this and (\ref{MT-6}) that
$$
\overline{\cup_{n=1}^{\infty} \psi_{n,\infty}(C_n)}=C.
$$
Thus
$$
C=\lim_{n\to\infty}(C_n, (\psi_n)|_{C_n}).
$$
It follows from (\ref{MT-5+++}) that $C$ is a unital simple
AH-algebra with very slow dimension growth. It follows from a
theorem of Guihua Gong (see Theorem 2.5 of \cite{LnJFA}) that
$TR(C)\le 1.$

\end{proof}

Now we are ready to prove Theorem \ref{MT1}.

\begin{NN} The proof of Theorem \ref{MT1}:\end{NN}
\begin{proof}
It follows from \ref{MT3} that $A,\, B\in {\cal A}.$ Thus the
theorem follows from \ref{MTQ} (see \cite{Lnclasn}).

\end{proof}

\begin{NN}
{\rm  A unital simple inductive limit of generalized dimension
drop algebra $A$  is said to have no dimension growth, if
$A=\lim_{n\to\infty}(A_n, \phi_n),$ where
$$
A_n=\oplus_{j=1}^{m(n)} P_{n,j}{\bar D}_{n,j}P_{n,j},
$$
$$
{\bar D}_{n,j}=D_{k(n)}(X_{n,j}, Y_{n,1},Y_{n,2},...,Y_{n,s(n,j)},
m_{n,1}, m_{n,2},...,m_{n,s(n,j)}),
$$
$X_{n,j}$ is a finite dimensional connected compact metric space
with property (A), $\{Y_{n,i}:1\le i\le s(n,j)\}$ is a collection
of finitely many disjoint compact subsets of $X_{n,j},$
$P_{n,j}\in {\bar D}_{n,j}$ is a projection and $\{{\rm dim }(X_{n,j})\}$
is bounded.}
\end{NN}

\begin{NN} The proof of Theorem \ref{MT2}:\end{NN}
\begin{proof}
By \cite{W1}, ASH-algebras with no dimension growth are of finite decomposition rank. 
It follows from another  recent result of W. Winter (\cite{W3}) that $A$
and $B$ are ${\cal Z}$-stable. Therefore the theorem follows
from \ref{MT1} immediately.

\end{proof}

\bibliographystyle{amsalpha}
\bibliography{}

\end{document}